\newcommand{\referenza}{}
\newtheorem{thm}{Theorem}[section]
\newtheorem*{thm*}{Theorem \referenza}
\newtheorem{definition}[thm]{Definition}
\newtheorem*{cor*}{Corollary \referenza}
\newtheorem{lem}[thm]{Lemma}
\newtheorem*{lem*}{Lemma \referenza}
\newtheorem{prop}[thm]{Proposition}
\newtheorem*{prop*}{Proposition \referenza}
\newtheorem*{conj*}{Conjecture \referenza}
\newtheorem{rmk}[thm]{Remark}
\newtheorem*{rmk*}{Remark}
\newtheorem{defi}[thm]{Definition}
\numberwithin{equation}{section}
\def \N {\mathbb N}
\def \H {\mathbb H}
\def \R {\mathbb R}
\def \C {\mathbb C}
\def \Z {\mathbb Z}
\title{Slice-quaternionic Hopf surfaces}
\author{Daniele Angella}
\address[Daniele Angella]{
Dipartimento di Matematica e Informatica ``Ulisse Dini''\\
Universit\`a degli Studi di Firenze\\
viale Morgagni 67/a\\
50134 Firenze, Italy
}
\email{daniele.angella@gmail.com}
\email{daniele.angella@unifi.it}
\author{Cinzia Bisi}
\address[Cinzia Bisi]{Dipartimento di Matematica e Informatica\\
Universit\`a di Ferrara\\
Via Machiavelli 35\\
44121 Ferrara, Italy
}
\email{cinzia.bisi@unife.it}
\keywords{slice-regular; slice-quaternionic; Hopf surface; deformation}
\thanks{The first author is supported by project PRIN ``Variet\`a reali e complesse: geometria, topologia e analisi armonica'', by project FIRB ``Geometria Differenziale e Teoria Geometrica delle Funzioni'', by project SIR2014 ``Analytic aspects in complex and hypercomplex geometry'' code RBSI14DYEB, and by GNSAGA of INdAM.
The second author was partially supported by Prin 2015 Protocollo: 2015A35N9B\_013, by Firb 2012 Codice: RBFR12W1AQ-001 and by GNSAGA-INdAM}
\subjclass[2010]{30G35; 53C15}
\date{\today}
\begin{document}

\begin{abstract}
 We investigate slice-quaternionic Hopf surfaces.
 In particular, we construct new structures of slice-quaternionic manifold on $\mathbb{S}^1\times\mathbb{S}^7$, we study their group of automorphisms and their deformations.
\end{abstract}

\maketitle

\section*{Introduction}

Holomorphic functions on open domains of $\mathbb C^n$ yield a category, whence the notion of complex manifold. In fact, just a directed graph structure may suffice to get models for constructing structures on manifolds. In particular, we are interested in slice-regular functions over the quaternions in the sense of \cite{ghiloni-perotti}, see Definition \ref{defsurfreg} adapted to the two variables. Such a notion has its source in the work by Fueter, further developed by Ghiloni and Perotti. It represents a counterpart in several variables of the notion of slice-regular function in one quaternionic variable studied in \cite{gentili-struppa-cras, gentili-struppa}, which appeared to share with holomorphic functions a rich theory from the analytic point of view, \cite{bisi-stoppato-ind},\cite{bisi-stoppato-land}; see also \cite{colombo-sabadini-struppa}. We refer to \cite{gentili-stoppato-struppa, ghiloni-perotti} for precise definitions and for results.

The first examples of manifolds modelled over quaternions are constructed and studied in \cite{salamon, sommese, bisi-gentili, gentili-gori-sarfatti}:
quaternionic tori \cite{bisi-gentili}, quaternionic projective spaces $\mathbb H\mathbb P^1$ and $\mathbb H \mathbb P^n$ \cite{salamon}, (affine) Hopf quaternionic manifolds \cite{sommese}, blow-up of $\mathbb H^n$ at $0$ and, more in general, of a quaternionic manifold at a point \cite{gentili-gori-sarfatti}, quaternionic Iwasawa manifolds \cite{sommese}, affine quaternionic manifolds \cite{gentili-gori-sarfatti, gentili-gori-sarfatti-2}. But for some exceptions \cite{gentili-gori-sarfatti}, they are indeed affine quaternionic structures. \\
In Physics also, large classes of examples of noncommutative finite-dimensional manifolds have been exhibited in connection with the Yang-Baxter equation: for example, the so called $\theta-$deformations of $\mathbb{C}^2$ (identified with $\mathbb{R}^4$ and $\mathbb{H}$), $\mathbb{C}^2_{\theta},$ studied in \cite{connes-duboisviolette} and in \cite{connes-landi} with their natural quantum groups of symmetries which are $\theta-$deformations of the classical groups $GL(4, \mathbb{R})$, $ SL(4,\mathbb{R})$ and $GL(2,\mathbb{C}).$

\medskip

This note has the aim to extend the above class of examples. More precisely, we construct and investigate {\em slice-quaternionic primary Hopf surfaces}, namely, manifolds whose universal cover is $\H^2\setminus\{(0,0)\}$ and the fundamental group equals the infinite cyclic group $\Z$, endowed with a structure of slice-quaternionic manifold.
In a sense, these are the simplest examples other than tori. Notice indeed, as in \cite{sommese}, that these constructions correspond to the two ways of constructing affine complex manifolds in dimension $1$, namely, $\left.\C\middle\slash(\Z\oplus\sqrt{-1}\,\Z)\right.$ and $\left.\C^*\setminus\{0\}\middle\slash\Z\right.$. (For the affine complex case, see also \cite{vitter}.)
This is done in view to further understand a possible notion of manifold in the slice-quaternionic class.

Other than quaternionic affine structures \cite{sommese}, case {\itshape (A)} in Theorem \ref{thm:hopf-quat}, we get new slice-quaternionic structures: take $\lambda\in\H\setminus\{0\}$, $p\in\N\setminus\{0,1\}$, $\beta\in\mathbb{H}$ with $0<|\beta|<1$, (in a moment, we will restrict to $\beta\in\R$) and define
$$ f(z,w) \;:=\; \left(z\cdot \beta^p+w^p\cdot \lambda,\; w\cdot \beta\right) \;. $$
As for notation, recall that any (non-real) quaternion $q\in\mathbb H$ can be written (uniquely) as $q=x+y \cdot I$ with $I\in\mathbb S^2=\{q\in\H \;:\; q^2=-1\}$ and $y\geq0$. We denote $I_q:=I$, and we set $L_{I_q}:=\R\oplus \R\cdot I_q\simeq\C$. In case $q\in\R$, we set $L_{I_q}=\mathbb{H}$.
Moreover, $\mathrm{ext}$ denotes the regular extension:
it acts on a function defined on $\mathbb{H}\times L_{I_\beta}$ giving a slice-regular extension on $\mathbb{H}^2$. Consider then
$$ \Gamma \;:=\; \left\{\mathrm{ext}(f\lfloor_{\H \times L_{I_\beta}}^{\circ k}) = f^{\circ k} \;:\; k\in\Z \right\} \;. $$
Note that, in order to $\Gamma$ being a group of automorphisms in the sense of \cite{ghiloni-perotti} in $\mathrm{Aut}(\H^2\setminus\{(0,0)\})$, we have to force $\beta\in\R$. 
In fact, in this case, this allows us to avoid regular extension.
Then we consider
$$ \left. \H^2 \setminus\{(0,0)\} \middle\slash \Gamma \right. \;, $$
see case {\itshape (B)} in Theorem \ref{thm:hopf-quat}.

We prove the following result.
\renewcommand{\referenza}{\ref{thm:hopf-quat}}
\begin{thm*}
 Let $f\colon \H^2\to \H^2$ be the function
  \begin{equation}\label{eq:f}
   f(z,w) \;:=\; \left( z \cdot \alpha + w^p \cdot \lambda, \, w \cdot \beta \right) \;,
  \end{equation}
 where $p\in\N\setminus\{0\}$, $\alpha,\beta,\lambda\in\H$ are such that
 $$ 0 \;<\; \left|\alpha\right| \;\leq\; \left|\beta\right| \;<\; 1 \qquad \text{ and } \qquad \left(\alpha-\beta^p\right)\cdot \lambda \;=\; 0 \;. $$
 In the following cases, the quotient of $\H^2\setminus\{(0,0)\}$ by the subgroup generated by $f$ yields a structure of slice-quaternionic Hopf surface:
 \begin{description}
  \item[Case A.]\ 
   \begin{description}
    \item[Case A.1.] when $\lambda=0$, $\alpha=\beta\in\H$ with $0<|\alpha|<1$;
    \item[Case A.2.] when $\lambda=0$, $\alpha,\beta\in\H$ with $0<|\alpha|\leq|\beta|<1$ and $\alpha\neq\beta$;
    \item[Case A.3.] when $\lambda\in\H$ with $\lambda\neq0$, $p=1$, $\alpha=\beta\in\H$ with $0<|\alpha|<1$;
   \end{description}
  \item[Case B.] when $\lambda\in\H$ with $\lambda\neq0$, $p\in\N$ with $p>1$, $\beta \in\R$ with $0<|\beta|<1$, and $\alpha=\beta^p$.
  \end{description}
\end{thm*}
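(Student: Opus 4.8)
The plan is to follow the classical template for Kodaira's primary Hopf surfaces, transposing each step to the slice-regular category. As a preliminary remark, since $\H$ is a division ring the hypothesis $(\alpha-\beta^p)\cdot\lambda=0$ is equivalent to ``$\lambda=0$ or $\alpha=\beta^p$'', which is exactly the dichotomy between Case A and Case B. First I would check that $f$ is a slice-regular automorphism of $\H^2\setminus\{(0,0)\}$ in the sense of \cite{ghiloni-perotti}: each component of $f$ is a polynomial with quaternionic coefficients written on the right, hence slice-regular in the sense of Definition \ref{defsurfreg}; moreover $f(0,0)=(0,0)$ and, because $0<|\alpha|\le|\beta|$, the equation $f(z,w)=(0,0)$ forces first $w=0$ and then $z=0$, so $f$ restricts to a bijection of the punctured space, with inverse $f^{-1}(u,v)=\bigl((u-(v\cdot\beta^{-1})^p\cdot\lambda)\cdot\alpha^{-1},\,v\cdot\beta^{-1}\bigr)$ given by the same reasoning.

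The crucial point is to control the iterates $f^{\circ k}$ and to verify that they stay in the slice-regular class, so that $\Gamma=\langle f\rangle$ is a genuine group of slice-regular automorphisms rather than a set of merely set-theoretic maps. In Case A the map is $\R$-linear (diagonal when $\lambda=0$, a single off-diagonal block when $p=1$), and an easy induction shows that each $f^{\circ k}$ is again $\R$-linear with coefficients on the right, hence slice-regular. In Case B the nonlinear monomial $w^p$ appears, and this is precisely where the assumption $\beta\in\R$ is used: since a real $\beta$ commutes with every quaternion, one has $(w\cdot\beta^j)^p=w^p\cdot\beta^{jp}$, so composing creates no genuine slice product and no regular extension is needed. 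An induction then gives the closed form
\begin{equation*}
 f^{\circ k}(z,w)\;=\;\bigl(z\cdot\beta^{kp}+k\,w^p\cdot\beta^{(k-1)p}\cdot\lambda,\;w\cdot\beta^k\bigr),\qquad k\in\Z,
\end{equation*}
again a right-coefficient polynomial, hence slice-regular, which also shows that $k\mapsto f^{\circ k}$ is an injective homomorphism, so $\Gamma\cong\Z$.

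Next I would establish that the $\Z$-action is free and properly discontinuous. Freeness is read off the explicit iterates: if $f^{\circ k}$ fixes some $(z,w)\neq(0,0)$, the second coordinate gives $\beta^k=1$ when $w\neq0$ and the first coordinate gives $\alpha^k=1$ when $w=0$; taking moduli and using $|\alpha|,|\beta|<1$ yields $k=0$. For proper discontinuity I would exploit the contraction encoded in $0<|\alpha|\le|\beta|<1$: the linear part of $f$ at the origin has eigenvalues all of modulus $|\alpha|$ or $|\beta|$, hence $<1$, so $(0,0)$ is an attracting fixed point, while the explicit formulas show in addition that $\|f^{\circ k}(z,w)\|\to0$ as $k\to+\infty$ and $\|f^{\circ k}(z,w)\|\to+\infty$ as $k\to-\infty$ for every $(z,w)\neq(0,0)$, the nuisance factor $k|\beta|^{(k-1)p}$ being harmless. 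Thus the origin is a global attractor for $f$ and a global repeller for $f^{-1}$, and the classical argument (as in \cite{sommese}) shows that each compact subset of $\H^2\setminus\{(0,0)\}$ meets only finitely many of its $\Gamma$-translates, giving proper discontinuity.

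It then follows that $X:=\left.\H^2\setminus\{(0,0)\}\middle\slash\Gamma\right.$ inherits a quotient structure of slice-quaternionic manifold, because $\Gamma$ acts by slice-regular automorphisms and the charts of $\H^2\setminus\{(0,0)\}$ descend compatibly; its universal cover is $\H^2\setminus\{(0,0)\}$ and $\pi_1(X)\cong\Gamma\cong\Z$, so $X$ is a primary slice-quaternionic Hopf surface. The diffeomorphism $X\cong\mathbb{S}^1\times\mathbb{S}^7$ follows from $\H^2\setminus\{(0,0)\}\cong\R^8\setminus\{0\}\simeq\mathbb{S}^7\times\R$ together with the contraction dynamics. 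I expect the only real obstacle to lie in the slice-regularity of the iterates in Case B: one must make sure that the literal composition $f^{\circ k}$ agrees with a slice-regular function and that no regular extension is implicitly required, which is exactly what the reality of $\beta$ secures, as anticipated in the discussion preceding the statement.
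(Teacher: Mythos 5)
Your proposal is correct and follows essentially the same route as the paper: Case A is handled via linearity (the paper packages this as its affine-structure lemma), and Case B rests on the explicit iterates $f^{\circ k}(z,w)=(z\cdot\beta^{kp}+k\,w^p\cdot\beta^{(k-1)p}\cdot\lambda,\,w\cdot\beta^k)$, whose slice-regularity is exactly what $\beta\in\R$ guarantees, followed by freeness, proper discontinuity, and the quotient proposition. The only cosmetic difference is that the paper argues proper discontinuity with annuli in $|w|$ while you use the full norm and the attractor/repeller dynamics; both work.
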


\begin{rmk*}
We wonder whether other slice-quaternionic structures on $\mathbb{S}^1\times\mathbb{S}^7$ may be constructed; see Remark \ref{rmk:other}.
\end{rmk*}

The automorphism groups of the above slice-quaternionic structures are investigated in Theorem \ref{thm:aut-hopf}. Notice that, in general, a slice-quaternionic structure does not induce a holomorphic structure; compare also Remark \ref{rmk:affine-cplx}.
(Note indeed that $(X_0+X_1 \cdot J)\cdot (Y_0+Y_1 \cdot J) = (X_0\cdot Y_0-X_1\cdot \bar Y_1)+(X_0\cdot Y_1+X_1 \cdot \bar Y_0) \cdot J$ for $X_0,X_1,Y_0,Y_1\in\R\oplus I\cdot\R$, where $I,J$ are orthogonal complex structures on $\R^4$.)
Therefore the slice-quaternionic Hopf surfaces in case {\itshape (B)} do not underlie a complex Calabi-Eckmann structure.

We prove the following result.

\renewcommand{\referenza}{\ref{thm:aut-hopf}}
\begin{thm*}
Let $X = \left. \H^2 \setminus\{(0,0)\} \middle\slash \langle f \rangle \right.$ be a slice-quaternionic Hopf surface, where $f$ is as in equation \eqref{eq:f}. The dimension of the group of automorphisms of $X$ is as follows:
\begin{description}
 \item[Case A.1.] $\dim_\R \mathrm{Aut}(X)\in\{8,16\}$;
 \item[Case A.2.] $\dim_\R\mathrm{Aut}(X)\in\{4,6,8\}$;
 \item[Case A.3.] $\dim_\R\mathrm{Aut}(X)\in\{4,8\}$;
 \item[Case B.] $\dim_\R\mathrm{Aut}(X)=5$.
\end{description}
\end{thm*}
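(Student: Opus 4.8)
The plan is to reduce the computation to a centralizer calculation on the universal cover, and then to a finite system of equations on the coefficients of a slice power series, solved case by case.

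\emph{Step 1: reduction to a centralizer on the cover.} Since $\H^2\setminus\{(0,0)\}\cong\R^8\setminus\{0\}$ is simply connected and $\langle f\rangle\cong\Z$ acts freely and properly discontinuously, every automorphism of $X$ lifts to one of the cover normalizing $\langle f\rangle$, and $\mathrm{Aut}(X)\cong N(\langle f\rangle)/\langle f\rangle$, where $N$ is the normalizer inside the slice-regular automorphisms of $\H^2\setminus\{(0,0)\}$. As $\langle f\rangle$ is discrete, $\dim_\R\mathrm{Aut}(X)=\dim_\R N(\langle f\rangle)$. First I would invoke a removable-singularity (Hartogs-type) argument — the singular locus $\{(0,0)\}$ has real codimension $8$ — to extend each such automorphism to a slice-regular automorphism $F$ of $\H^2$ fixing $(0,0)$. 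Next, writing the normalizing relation as $F\circ f=f^{\circ n}\circ F$ and differentiating at the fixed point, the maps $df^{\,n}_{0}$ and $df_{0}$ become conjugate, hence share the moduli $\{|\alpha|,|\beta|\}\subset(0,1)$ of their quaternionic eigenvalues; since $t\mapsto t^{n}$ is strictly monotone on $(0,1)$ this forces $n=1$. Thus $N(\langle f\rangle)$ equals the centralizer $Z(f)=\{F:F\circ f=f\circ F\}$.

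\emph{Step 2: coefficient matching.} I would expand $F=(F_1,F_2)$ as a slice power series vanishing at the origin and rewrite $F\circ f=f\circ F$ as the pair
$$F_1\circ f=F_1\cdot\alpha+F_2^{\,p}\cdot\lambda,\qquad F_2\circ f=F_2\cdot\beta,$$
comparing homogeneous bidegrees. The linear part $A=dF_0$ must commute with $df_0$, which is right multiplication by $\mathrm{diag}(\alpha,\beta)$, with an extra off-diagonal $\lambda$-block precisely when $p=1$. Writing $A(z,w)=(z\cdot a+w\cdot c,\;z\cdot b+w\cdot d)$, the commutation splits into the diagonal centralizer conditions $[\alpha,a]=0$, $[\beta,d]=0$ and the off-diagonal intertwining conditions $c\cdot\alpha=\beta\cdot c$, $\alpha\cdot b=b\cdot\beta$. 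Over $\H$ the centralizer of a quaternion has real dimension $4$ if the quaternion is real and $2$ otherwise, while a nonzero intertwiner forces $\alpha$ and $\beta$ to be similar, in particular $|\alpha|=|\beta|$.

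\emph{Step 3: case analysis.} In Case A.1 ($\alpha=\beta$, $\lambda=0$) every entry need only commute with $\alpha$, so $A$ ranges over $2\times2$ matrices with entries in the centralizer of $\alpha$, and homogeneity kills all higher terms: dimension $16$ when $\alpha\in\R$ and $8$ otherwise. In Case A.2 ($\lambda=0$, $\alpha\neq\beta$) the map $f$ is diagonal, so when $|\alpha|<|\beta|$ the intertwiners $b,c$ vanish and the linear part contributes $\dim a+\dim d\in\{2+2,\,2+4,\,4+4\}=\{4,6,8\}$ according to the reality of $\alpha,\beta$, the equal-modulus similar sub-case also giving $8$. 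In Case A.3 ($p=1$, $\lambda\neq0$, $\alpha=\beta$) one solves the $\lambda$-coupled linear system: for $\alpha\in\R$ all commutators vanish, forcing $b=0$ and $d=\lambda\cdot a\cdot\lambda^{-1}$ while $a,c$ stay free, giving dimension $8$; a non-real $\alpha$ imposes the extra commutator constraints and cuts this to $4$. Finally, in Case B ($p>1$, $\lambda\neq0$, $\beta\in\R$, $\alpha=\beta^p$) the second equation forces $F_2=w\cdot a$, and the first, through the $F_2^{\,p}\cdot\lambda$ coupling, forces $a\in\R$ and ties the $z$-coefficient of $F_1$ to $a^{p}$, leaving only a free $w^{p}$-block $e\in\H$; this yields $1+4=5$.

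\emph{Main obstacle.} The hard part will be the rigorous control of the higher-order terms, which is where slice-regularity — rather than mere smoothness — is indispensable. Because the slice product is non-commutative and right multiplication by a non-real quaternion is not slice-preserving, the composition $F\circ f$ cannot be treated as a naive substitution, and one must justify the term-by-term matching through the representation formula and the structure of two-variable slice-regular series. The delicate core is therefore to pin down exactly which resonant monomials are compatible with slice-regularity and contribute to $Z(f)$ — in particular that in Case B the unique surviving nonlinear term is $w^{p}\cdot e$ with the scalar coupling forcing $a\in\R$ — and to verify that the remaining higher-order contributions leave the dimensions within the stated ranges; the reality hypothesis on $\beta$ in Case B is precisely what makes this composition tractable.
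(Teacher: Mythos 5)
Your plan follows essentially the same route as the paper: lift an automorphism of $X$ to the universal cover (Proposition \ref{prop:group-action-maps}), extend across the origin by the Hartogs phenomenon, expand as an ordered power series with right coefficients, impose $\Phi\circ f=f\circ\Phi$, and match coefficients case by case; your extra observation that the normalizer condition $F\circ f=f^{\circ n}\circ F$ forces $n=1$ by comparing moduli at the fixed point is a point the paper passes over silently, and it is a worthwhile addition. Two caveats. First, in Case A.3 you assert that non-real $\alpha$ ``cuts this to $4$'': the paper's analysis of the system \eqref{eq:serie-case-a-3} shows that for non-real $\alpha$ the dimension can still be $8$ (when $\lambda\cdot b_{1,0}\cdot\alpha^{-1}$ lies in $L_{I_\alpha}^{\perp}$ and the compatibility condition $\lambda\cdot a_{1,0}^{\perp,\alpha}-b_{0,1}^{\perp,\alpha}\cdot\lambda=0$ holds), so your sub-case attribution is wrong even though the resulting set $\{4,8\}$ is unchanged. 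Second, you correctly identify the elimination of higher-order terms as the crux but leave it as an ``obstacle'' rather than carrying it out; this is where the paper spends most of its effort, and the mechanism is concrete: when the relevant quaternion is non-real, slice-regularity of the left-hand side of \eqref{eq:comm} forces all non-real coefficients to sit on the right, which annihilates every monomial with $h+k\neq1$ (or the appropriate weighted degree), while in the real case the norm identity $|\alpha|^{h+k-1}\,|a_{h,k}|=|a_{h,k}|$ with $|\alpha|<1$ does the same job; in Cases A.3 and B the comparison is made tractable by restricting to the real curves $\{(z,z\cdot\mu)\}$ and $\{(z^p,\mu\cdot z)\}$ and equating coefficients in $z$ and $\mu$ separately. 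Your outline would need these ingredients spelled out to become a proof, but as a strategy it coincides with the paper's.
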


Finally, we provide families of slice-quaternionic Hopf surfaces, connecting cases {\itshape (A.1)} and {\itshape (A.3)}, respectively cases {\itshape (A.2.1)} and {\itshape (B)}, see Section \S\ref{sec:deformations}.

\begin{rmk}
As suggested by the anonymous Referee, we observe that analogous constructions can be performed to obtain slice-quaternionic Hopf manifolds of higher dimension, and that similar techniques might be possibly developed for the study of other slice-quaternionic manifolds, for example manifolds of Calabi-Eckmann type and Inoue surfaces type. This will be investigated in a forthcoming paper.
\end{rmk}

\bigskip

The plan of this note is as follows.
In Section \ref{sec:slice-reg-mfds}, we recall the notions of slice-quaternionic manifold and slice-quaternionic map.
In Section \ref{sec:slice-reg-hopf}, we study slice-quaternionic Hopf surfaces, proving Theorem \ref{thm:hopf-quat}.
In Section \ref{sec:aut}, we describe the group of automorphisms of these manifolds, Theorem \ref{thm:aut-hopf}.
Finally, in Section \ref{sec:deformations}, we construct families of slice-quaternionic Hopf surfaces.

\bigskip

\noindent{\sl Acknowledgments.}
The authors would like to warmly thank Graziano Gentili, Anna Gori, Alessandro Perotti, Giulia Sarfatti, Caterina Stoppato for interesting and useful conversations.
Thanks also to the anonymous Referee for useful comments and suggestions.

\bigskip
 
\section{Regular slice-quaternionic structures on manifolds}\label{sec:slice-reg-mfds}
Slice-quaternionic manifolds are introduced and studied in \cite{bisi-gentili, gentili-gori-sarfatti}. For sake of completeness and for reader's convenience we rephase here the definition of slice regular function of several quaternionic variables, as introduced by \cite{ghiloni-perotti} and \cite[\S2]{gentili-gori-sarfatti}, adapted to the two variables.
Let $D$ be an open subset of $\mathbb{C}^2,$ invariant with respect to complex conjugation in each variable $z_1, z_2.$ Let $\mathbb{R}_2$ be the real Clifford algebra (isomorphic to $\mathbb{H}$) with basic units $e_1, e_2$ and let $e_{12}=e_1 e_2.$
\begin{definition}
A continuous function $F \colon D \to \mathbb{H} \otimes \mathbb{R}_2,$ with $F=F_0+e_1F_1+e_2F_2+e_{12} F_{12}$ and $F_K \colon D \to \mathbb{H}$ is called a {\em stem function} if it is Clifford-intrinsic, i.e. for each $K \in \mathcal{P}(2),$ $h \in \{ 1, 2 \}$ and $z=(z_1,z_2) \in D,$ the components $F_0,F_1,F_2,F_{12}$ are, respectively, even-even, odd-even, even-odd, odd-odd with respect to $(\beta_1, \beta_2)$ where $\beta_1=\Im m (z_1)$ and $\beta_2= \Im m (z_2),$ e.g. $F_1(\overline{z_1}, z_2)=-F_1(z_1,z_2)$ and so on. 
\end{definition} 

Let $\Omega_D$ be the circular subset of $\mathbb{H}^2$ associated to $D \subset \mathbb{C}^2:$ 
$$
\Omega_D=\{ x=(x_1,x_2) \in \mathbb{H}^2 \,\, | \,\, x_h=\alpha_h + \beta_h J_h \in \mathbb{C}_{J_h}, J_h \in \mathbb{S}_{\mathbb{H}}, (\alpha_1 + i \beta_1, \alpha_2 + i \beta_2) \in D \}.
$$
\begin{definition}
Given a stem function $F \colon D \to \mathbb{H} \otimes \mathbb{R}_2,$ we define the {\em left slice function} $\mathcal{I} (F) \colon \Omega_D  \to \mathbb{H}$ induced by $F$ by setting, for each $x=(x_1,x_2)=(\alpha_1 +J_1 \beta_1, \alpha_2 +J_2 \beta_2)$
$$
\mathcal{I}(F)(x):= F_0(z_1,z_2)+J_1F_1(z_1,z_2) +J_2F_1(z_1,z_2) +J_1J_2F_{12}(z_1,z_2)  
$$
where $(z_1,z_2)=(\alpha_1 +i \beta_1, \alpha_2 + i \beta_2) \in D.$
\end{definition}
\begin{definition}\label{defsurfreg}
Let $F \colon D \to \mathbb{H} \otimes \mathbb{R}_2$ be a stem function of class $C^1$ and let $f=\mathcal{I}(F) \colon \Omega_D \to \mathbb{H}$ the induced slice function. Then $F$ is called {\em holomorphic stem function} if $\overline{\partial}_h F =0$ on $D$ for $h=1,2,$ where
$$
\overline{\partial}_1 F= \frac{1}{2} \left(\frac{\partial F}{\partial \alpha_1} + e_1 \left(\frac{\partial F}{\partial \beta_1}\right)\right)
\qquad \text{ and } \qquad
\overline{\partial}_2 F= \frac{1}{2} \left(\frac{\partial F}{\partial \alpha_2} + \left(\frac{\partial F}{\partial \beta_2}\right)e_2\right) .
$$
If $F$ is holomorphic, then we say that $f=\mathcal{I}(F)$ is a {\em (left) slice regular function} on $\Omega_D$.
\end{definition}

We consider now the geometric notion of manifold associated with slice regular functions.
\begin{defi}[{\cite[Definitions 3.1--3.2]{gentili-gori-sarfatti}}]
 Let $X$ be a differentiable manifold. Then $X$ is said to be a {\em slice(-regular)-quaternionic manifold} when endowed with a {\em slice(-regular)-quaternionic structure}, that is, a differentiable structure with changes of charts being slice regular in the sense of \cite[Definition 7]{ghiloni-perotti} (for surfaces you can also see Definition \ref{defsurfreg}).
(Note, in particular, that the domains of definition of transition functions must be circular \cite[page 736]{ghiloni-perotti}.)
 {\em Slice(-regular)-quaternionic maps} between slice-quaternionic manifolds are maps being slice regular in the sense of \cite[Definition 7]{ghiloni-perotti} in local coordinates.
\end{defi}

Note that, since the composition of slice regular functions is not slice regular in general, then the conditions have to be checked on any charts. (This motivates the possible preferred choice of special sub-classes of slice regular functions, {\itshape e.g.}, affine functions.)

By automorphism, we mean a slice-quaternionic map from $X$ to itself whose inverse is still slice-quaternionic.

In particular, slice regular functions $f\colon D \to \H$ in the sense of \cite[Definition 7]{ghiloni-perotti} in one variable on a circular domain $D\subseteq\H$ such that $D\cap\R\neq\varnothing$ are slice regular in the sense of \cite{gentili-struppa-cras, gentili-struppa}. In \cite{bisi-gentili}, slice-quaternionic structures on a $4$-real-dimensional torus are constructed and classified. More in general, since ordered polynomial functions $p(x) = \sum_{\ell} x^\ell a_\ell$ with right coefficients in $\H$ are slice regular, \cite[Examples 3.1]{ghiloni-perotti}, then any affine structure is slice-quaternionic.

\medskip

The following result allows to single out a first class of manifolds admitting quaternionic structures, see also \cite[\S3.3]{gentili-gori-sarfatti}.
For its holomorphic analogue, see, {\itshape e.g.}, \cite[2.1.5, 2.1.7]{kobayashi-horst}.
(Note indeed that we can choose circular domains as charts.)

\begin{lem}\label{lem:affine}
 Let $M$ be a differentiable manifold of dimension $4n$.
 Then $M$ admits an affine (whence slice-quaternionic) structure \cite[Definition 3.8]{gentili-gori-sarfatti} if and only if
 there is an immersion $\psi\colon \tilde M \to \H^n$ of the universal covering $\tilde M$ of $M$ such that, for every covering transformation $\gamma$ we have $\psi\circ\gamma=X_\gamma\circ\psi$ for some affine transformation $X_\gamma$ of $\H^n$.
\end{lem}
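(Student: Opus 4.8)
The plan is to run the classical developing-map argument for $(G,X)$-structures, here with $X=\H^n$ and $G$ the group of affine transformations of $\H^n$, in the slice-quaternionic setting; this parallels the holomorphic version in \cite[2.1.5, 2.1.7]{kobayashi-horst}. I would treat the two implications separately.

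For the ``only if'' direction, assume $M$ carries an affine structure \cite[Definition 3.8]{gentili-gori-sarfatti}, that is, an atlas of charts into circular open subsets of $\H^n$ whose transition maps are restrictions of affine transformations. I would first pull this atlas back to the universal covering $\tilde M$ along the covering projection $\pi\colon\tilde M\to M$, obtaining an affine atlas on $\tilde M$ with the same transition cocycle. The first key step is then to assemble the local charts into a single globally defined \emph{developing map} $\psi\colon\tilde M\to\H^n$: fix a base chart and extend it by analytic continuation along paths, precomposing each newly encountered chart with the inverse of the relevant affine transition so that overlapping pieces glue. The crucial point is the rigidity of affine maps of $\H^n$ — such a map is uniquely determined by its restriction to any nonempty open set — which guarantees that the continuation of a chart along a path depends only on the homotopy class of the path; since $\tilde M$ is simply connected, the continuation is single-valued and yields a well-defined $\psi$. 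Each local piece of $\psi$ is a chart of the affine structure, hence a diffeomorphism onto a circular open subset of $\H^n$, and since $\dim_\R\tilde M=4n=\dim_\R\H^n$ the map $\psi$ is a local diffeomorphism, in particular an immersion.

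For the equivariance, let $\gamma$ be a covering transformation. Both $\psi$ and $\psi\circ\gamma$ are developing maps for the same affine structure on $\tilde M$, since $\gamma$ intertwines the pulled-back charts, so on each chart they differ by an affine transformation of $\H^n$. By the rigidity of affine maps together with the connectedness of $\tilde M$, these local affine transformations agree on overlaps and patch to a single affine transformation $X_\gamma$ satisfying $\psi\circ\gamma=X_\gamma\circ\psi$; the assignment $\gamma\mapsto X_\gamma$ is the holonomy of the structure. Conversely, given such an immersion $\psi$, the equality $\dim_\R\tilde M=4n=\dim_\R\H^n$ again makes $\psi$ a local diffeomorphism, so each point of $\tilde M$ has a neighborhood on which $\psi$ restricts to a diffeomorphism onto a circular open subset of $\H^n$; these restrictions form an affine atlas on $\tilde M$, whose transition maps are identities and hence trivially affine. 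The hypothesis $\psi\circ\gamma=X_\gamma\circ\psi$ with $X_\gamma$ affine says precisely that every covering transformation is affine with respect to this atlas, so the affine structure on $\tilde M$ is $\Gamma$-invariant and descends to $M=\tilde M/\Gamma$: the charts of $M$ obtained from local branches of $\psi\circ\pi^{-1}$ have transition maps given by restrictions of the affine maps $X_\gamma$. Thus $M$ inherits an affine structure, which is slice-quaternionic, as recalled above, because ordered polynomials, and in particular affine maps, are slice regular.

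I expect the main obstacle to be the ``only if'' direction: showing that the developing map is globally well-defined on $\tilde M$ and that the holonomy acts by honest affine transformations. The delicate point specific to the slice-quaternionic (rather than merely smooth) setting is to check that all the domains involved can be taken circular — as required in the definition of the structure — and that the gluing preserves circularity; for affine maps and the model $\H^n$ this is automatic, since affine images of circular sets are circular and the rigidity of affine maps renders the analytic continuation unambiguous, but it is exactly here that the affine (hence slice-regular) hypotheses, as opposed to arbitrary smooth transition functions, are essential.
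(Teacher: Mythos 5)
The paper states this lemma without proof, deferring to \cite[\S3.3]{gentili-gori-sarfatti} and to the holomorphic analogue in \cite[2.1.5, 2.1.7]{kobayashi-horst}; your developing-map/holonomy argument is precisely the standard proof that those references supply, and both implications are organized correctly (analytic continuation of a base chart, single-valuedness on the simply connected cover via rigidity of affine maps, holonomy equivariance; conversely, local inverses of the immersion as charts, with transition maps given by the $X_\gamma$).

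One side claim in your closing paragraph is false, however: the image of a circular set under an affine transformation of $\H^n$ is in general \emph{not} circular. For example, right multiplication by $i$ sends the circular set $\mathbb S=\{0+1\cdot J : J\in\mathbb S\}$ to the unit sphere of $\mathrm{span}_\R\{1,j,k\}$, which contains $\tfrac{1}{\sqrt2}(1+j)$ but not $\tfrac{1}{\sqrt2}(1+i)$ and hence is not a union of spheres $\{\alpha+\beta J: J\in\mathbb S\}$; circularity is preserved only for real (or suitably constrained) coefficients. This does not sink the proof: the transition functions of an affine atlas are restrictions of globally defined affine transformations of $\H^n$, which are slice regular on the circular domain $\H^n$ itself, so no circularity of the chart images or overlaps actually needs to be verified (this is also how the paper's parenthetical remark before the lemma should be read). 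You should simply drop that claim rather than lean on it.
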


\medskip

Another class of slice-quaternionic manifolds is constructed as follows.
Let $G$ be a group of automorphisms acting on a slice-quaternionic manifold $X$.
We recall the following notions.
\begin{itemize}
 \item $G$ is called {\em properly-discontinuous} if, for any compact sets $K_1$ and $K_2$ in $X$, there are only a finite number of elements $g\in G$ such that $g(K_1)\cap K_2\neq \varnothing$.
 \item A point $x\in X$ is called a fixed-point of $g\in G$ if $g(p)=p$. The group $G$ is called {\em fixed-point free} if, for any $g\in G\setminus\{\mathrm{id}\}$, there is no fixed-point of $g$.
\end{itemize}
Analougously as in the holomorphic case, (see, {\itshape e.g.}, \cite[Theorem 2.2]{kodaira},) we have the following results.

\begin{prop}\label{prop:group-action-objs}
 Let $X$ be a slice-quaternionic manifold. Let $G$ be a group of automorphisms of $X$ such that: $G$ is properly-discontinuous; $G$ is fixed-point free.
 Then the quotient space $\left.X\middle/G\right.$ has a structure of slice-quaternionic manifold, such that the projection map $\pi\colon X \to \left.X\middle\slash G\right.$ is a slice-quaternionic map.
\end{prop}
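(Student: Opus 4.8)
The plan is to follow the classical quotient construction for properly-discontinuous, fixed-point-free actions (as in Kodaira's \cite[Theorem 2.2]{kodaira}), taking care that the resulting transition maps remain slice regular. First I would record the topological input. Combining proper discontinuity with the fixed-point free hypothesis, a standard argument shows that every $x\in X$ admits an open neighbourhood $U$ with $g(U)\cap U=\varnothing$ for all $g\in G\setminus\{\mathrm{id}\}$; moreover, since $X$ is slice-quaternionic, $U$ may be shrunk so as to be contained in a single chart domain and to be circular. Proper discontinuity also guarantees that distinct orbits can be separated, so the quotient $\left.X\middle\slash G\right.$ is Hausdorff, and for such $U$ the restriction $\pi|_U\colon U\to\pi(U)$ is a homeomorphism; thus $\pi$ is a covering map, in particular a local homeomorphism.

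Next I would transport the charts. For each such $U$, let $\phi_U\colon U\to\Omega_U\subset\H^n$ be (the restriction of) a slice-quaternionic chart of $X$, with $\Omega_U$ circular, and define $\psi_U:=\phi_U\circ(\pi|_U)^{-1}\colon\pi(U)\to\Omega_U$. Each $\psi_U$ is a homeomorphism onto the circular open set $\Omega_U$, and the sets $\pi(U)$ cover $\left.X\middle\slash G\right.$; this yields a candidate atlas with circular chart domains, as required by the definition of slice-quaternionic structure.

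The crux is to verify that the transition functions are slice regular, and this is where the warning that compositions of slice regular maps need not be slice regular must be confronted. Given two charts $\psi_U,\psi_{U'}$ whose images overlap, pick $[q]\in\pi(U)\cap\pi(U')$ with lifts $q\in U$ and $q'\in U'$ satisfying $\pi(q)=\pi(q')$. Since the fibres of $\pi$ are $G$-orbits and $G$ acts freely, there is a unique $g\in G$ with $q'=g(q)$, and $g$ is locally constant on the connected components of the overlap. On that overlap the transition reads
$$ \psi_{U'}\circ\psi_U^{-1} \;=\; \phi_{U'}\circ g\circ\phi_U^{-1} \;. $$
The key observation is that this is exactly the local-coordinate expression of the single automorphism $g$ in the slice-quaternionic charts $\phi_U,\phi_{U'}$ of $X$; since $g$ is an automorphism, hence a slice-quaternionic self-map of $X$, this expression is slice regular by definition. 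In particular, no iterated composition of slice regular maps occurs: each transition is the coordinate representative of one group element, reducing, when $g=\mathrm{id}$, to an honest transition $\phi_{U'}\circ\phi_U^{-1}$ of the atlas of $X$, which is slice regular because $X$ is slice-quaternionic. Therefore $\{(\pi(U),\psi_U)\}$ is a slice-quaternionic atlas on $\left.X\middle\slash G\right.$.

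Finally, reading $\pi$ in the charts $\phi_U$ on $X$ and $\psi_U$ on $\left.X\middle\slash G\right.$ gives $\psi_U\circ\pi\circ\phi_U^{-1}=\mathrm{id}_{\Omega_U}$, which is slice regular, so $\pi$ is a slice-quaternionic map, indeed a local isomorphism. I expect the main obstacle to be precisely the transition-function step: one must arrange the charts so that each overlap factors through a single $g\in G$ rather than through a composition, and one must respect the circularity condition on chart domains \cite[page 736]{ghiloni-perotti} throughout the shrinking process. Once the atlas is set up this way, slice-regularity of the transitions follows directly from the hypothesis that the elements of $G$ are automorphisms.
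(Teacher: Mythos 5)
The paper does not actually write out a proof of this proposition: it is stated as the analogue of the holomorphic case with a pointer to Kodaira, so your reconstruction is filling a gap rather than paralleling an argument. Your overall strategy is the intended one, and you correctly identify and resolve the one point where the slice-quaternionic setting differs from the holomorphic one: since compositions of slice regular maps need not be slice regular, the transition functions must be arranged to be the local expression of a \emph{single} element $g\in G$ (an automorphism, hence slice regular by hypothesis), not an iterated composition. That observation is exactly what makes the classical argument go through here, and your chart construction $\psi_U=\phi_U\circ(\pi|_U)^{-1}$ with transitions $\phi_{U'}\circ g\circ\phi_U^{-1}$ is the right implementation.

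The one step you should not pass over so quickly is the claim that $U$ ``may be shrunk so as to be \ldots circular'' while still satisfying $g(U)\cap U=\varnothing$ for all $g\neq\mathrm{id}$. A circular neighbourhood of a point $x$ with chart image $(\alpha_1+\beta_1J_1,\alpha_2+\beta_2J_2)$ must contain the entire compact set $\{(\alpha_1+\beta_1J_1',\alpha_2+\beta_2J_2') : J_1',J_2'\in\mathbb{S}^2\}$, so circular neighbourhoods cannot be shrunk arbitrarily in the slice directions. Fixed-point freeness and proper discontinuity guarantee $g(U)\cap U=\varnothing$ for small $U$ around the \emph{point} $x$, but not for a set forced to contain this whole sphere-product: a nontrivial $g$ could permute points of the circularization of $x$ without fixing any of them, and then no circular $U$ containing $x$ can be disjoint from $g(U)$. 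So as written the chart construction needs the additional input that $G$ acts freely (and properly discontinuously) on circularizations, not merely on points. This is harmless in every application made in the paper --- for the Hopf generators $f_{p,\alpha,\beta,\lambda}$ the moduli $|z|,|w|$ are strictly contracted, so $f^{\circ k}$ moves the circularization of any point off itself --- but it is a genuine hypothesis that your proof (and, implicitly, the paper's unproved statement) is using. Either add it explicitly, or note that in the intended applications it is verified directly.
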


\begin{prop}\label{prop:group-action-maps}
 Let $X_1$ and $X_2$ be slice-quaternionic manifolds. Let $G_1$ and $G_2$ be groups of automorphisms of $X_1$, respectively $X_2$, being properly-discontinuous and fixed-point free.
 Let $f\colon X_1/G_1 \to X_2/G_2$ be a slice-quaternionic map. Then there exists a slice-quaternionic map $F\colon X_1 \to X_2$ such that the diagram
 $$ \xymatrix{
  X_1 \ar[d]_{\pi_1} \ar[r]^{F} & X_2 \ar[d]^{\pi_2} \\
  \left.X_1\middle\slash G_1\right. \ar[r]_{f} & \left.X_2\middle\slash G_2\right.
 }$$
 is commutative, where $\pi_1$ and $\pi_2$ denote the natural projections.
\end{prop}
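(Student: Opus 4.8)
The plan is to realise $\pi_1$ and $\pi_2$ as covering maps and then to lift $f$ by the usual covering-space machinery, paying special attention to the fact that, for slice-quaternionic maps, regularity is \emph{not} preserved under composition and must therefore be read off in suitably adapted charts. First I would record that, since $G_1$ and $G_2$ are properly-discontinuous and fixed-point free, the projections $\pi_1\colon X_1\to X_1/G_1$ and $\pi_2\colon X_2\to X_2/G_2$ are covering maps; by Proposition \ref{prop:group-action-objs} they are moreover slice-quaternionic, and by the very construction of the quotient structure each point admits a chart on which the relevant projection restricts to a slice-quaternionic diffeomorphism onto its image. In particular $\pi_1$ and $\pi_2$ are local slice-quaternionic isomorphisms. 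The object to be lifted is $g:=f\circ\pi_1\colon X_1\to X_2/G_2$, and I seek $F\colon X_1\to X_2$ with $\pi_2\circ F=g$; commutativity of the square is then exactly $\pi_2\circ F=f\circ\pi_1$.

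For the existence of $F$ as a continuous map I would invoke the lifting criterion for the covering $\pi_2$: a lift of $g$ through $\pi_2$ exists as soon as the image under $g_\ast$ of the fundamental group of $X_1$ is contained in the image under $(\pi_2)_\ast$ of the fundamental group of $X_2$. In the situation of interest $X_1$ is the universal cover $\H^2\setminus\{(0,0)\}$, which is simply connected, so this condition is automatic and a lift exists; equivalently one may construct $F$ directly by fixing base-points with $\pi_2(y_0)=g(x_0)$ and declaring $F(x)$ to be the endpoint of the $\pi_2$-lift of $g$ along any path from $x_0$ to $x$, the result being independent of the path by simple connectivity (or, in general, by the stated fundamental-group condition).

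It remains to check that $F$ is slice-quaternionic, and this is the step that requires care, since one cannot simply write $F=\pi_2^{-1}\circ f\circ\pi_1$ and invoke regularity of each factor. Instead I would argue in adapted charts: given $x\in X_1$, choose a chart $U\ni x$ on which $\pi_1$ is the identity in coordinates (possible by the construction of the quotient structure in Proposition \ref{prop:group-action-objs}) and a chart $V\ni F(x)$ on which $\pi_2$ is the identity in coordinates, shrinking $U$ so that $F(U)\subseteq V$. On $U$ the relation $\pi_2\circ F=f\circ\pi_1$ reads, in these coordinates, simply as $F=f$, so that the local coordinate expression of $F$ coincides with that of $f$, which is slice regular by hypothesis. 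Hence $F$ is slice regular in local coordinates at every point, i.e. slice-quaternionic, and the commutativity of the diagram holds by construction.

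The hard part is precisely this last step: because the composition of slice regular functions need not be slice regular, the regularity of $F$ cannot be deduced formally from that of its factors, and the argument goes through only because the covering projections can be normalised to the identity in adapted charts, reducing the local expression of $F$ to that of $f$. The construction of the global lift is otherwise routine covering-space theory, its only inputs beyond continuity being proper-discontinuity and freeness of the actions (which make the projections coverings) and simple connectivity of $X_1$ in the applications (which guarantees that the lift exists and is unique up to the deck group $G_2$).
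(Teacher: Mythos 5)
The paper itself gives no proof of this proposition: it is asserted, together with Proposition \ref{prop:group-action-objs}, by analogy with the holomorphic case and a citation of Kodaira. Your argument supplies exactly the proof that citation is standing in for: the projections are covering maps and local slice-quaternionic isomorphisms, one lifts $f\circ\pi_1$ through the covering $\pi_2$ by ordinary covering-space theory, and one checks slice regularity of the lift $F$ in the adapted charts in which both projections become the identity, so that the local coordinate expression of $F$ coincides with that of $f$. That last step is the only place where the slice-quaternionic setting genuinely differs from the smooth one, since slice regularity is not preserved under composition, and you identify and handle it correctly: the verification never composes two slice regular maps, it only transports the local expression of $f$ through the charts by which the quotient atlas is defined.

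One caveat, which is really a defect of the statement that your proof correctly exposes rather than a flaw in your argument: as literally written, for arbitrary slice-quaternionic $X_1$, the proposition is false. Take $G_1$ trivial (which satisfies both hypotheses vacuously), let $X_1=X_2/G_2$ be a slice-quaternionic Hopf surface with $X_2=\mathbb{H}^2\setminus\{(0,0)\}$, and let $f=\mathrm{id}$; a lift $F$ would be a global continuous section of the nontrivial infinite cyclic covering $\pi_2$, which does not exist. The lifting criterion you invoke, namely that $(f\circ\pi_1)_{*}$ carries the fundamental group of $X_1$ into the image of the fundamental group of $X_2$ under $(\pi_2)_{*}$, is therefore a genuine additional hypothesis, automatic in every application in this paper because there $X_1=\mathbb{H}^2\setminus\{(0,0)\}$ is simply connected. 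You should state that hypothesis explicitly (or restrict to simply connected $X_1$) rather than relegate it to a remark about ``the situation of interest''. A second, minor, point: when you shrink the adapted chart $U$ so that $F(U)\subseteq V$, the paper requires chart domains to be circular; for the domains arising here this can be arranged, but it deserves a sentence.
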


\section{Slice-quaternionic Hopf surfaces}\label{sec:slice-reg-hopf}

A {\em slice-quaternionic (primary) Hopf surface} is a slice-quaternionic manifold whose universal covering is $\H^2\setminus\{(0,0)\}$ and the fundamental group equals the infinite cyclic group $\Z$.

As a differentiable manifold, a slice-quaternionic Hopf surface $X$ is the same as the smooth manifold  underlying a complex Hopf manifold $\C^4\setminus\{0\}/\Z$, that is, it is diffeomorphic to $\mathbb S^1\times \mathbb S^7$. Its cohomology is then $H^\bullet(X;\Z)=\Z[0]\oplus\Z[-1]\oplus\Z[-7]\oplus\Z[-8]$. Note also that the differentiable manifold underlying $X$ admits a hypercomplex structure.

\medskip

We state now the main result, where we construct slice-quaternionic Hopf surfaces.
(The choice for the normal forms is assumed up to regular $*$-inverse and in accordance to the classical complex case, see \cite{sternberg}, to which we are reduced at least in the case of the generator $f$ of $\Gamma$ preserving one or all slices.)

\begin{thm}\label{thm:hopf-quat}
 Let $f\colon \H^2\to \H^2$ be the function
 $$ f(z,w) \;:=\; f_{p,\alpha,\beta,\lambda}(z,w) \;:=\; \left( z \cdot \alpha + w^p \cdot \lambda, \, w \cdot \beta \right) \;, $$
 where $p\in\N\setminus\{0\}$, $\alpha,\beta,\lambda\in\H$ are such that
 $$ 0 \;<\; \left|\alpha\right| \;\leq\; \left|\beta\right| \;<\; 1 \qquad \text{ and } \qquad \left(\alpha-\beta^p\right)\cdot \lambda \;=\; 0 \;. $$
 We consider the following cases.
 \begin{description}
  \item[Case A.] Consider the following sub-cases.
   \begin{description}
    \item[Case A.1.] When $\lambda=0$, $\alpha=\beta\in\H$ with $0<|\alpha|<1$.
    \item[Case A.2.] When $\lambda=0$, $\alpha,\beta\in\H$ with $0<|\alpha|\leq|\beta|<1$ and $\alpha\neq\beta$.
    \item[Case A.3.] When $\lambda\in\H$ with $\lambda\neq0$, $p=1$, $\alpha=\beta\in\H$ with $0<|\alpha|<1$.
   \end{description}
  \item[Case B.] When $\lambda\in\H$ with $\lambda\neq0$, $p\in\N$ with $p>1$, $\alpha=\beta^p\in\H$ with $0<|\beta|<1$.
   Assume moreover that $\beta\in\R$.
 \end{description}
 Set
 $$ \Gamma \;:=\; \left\{f^{\circ k} \;:\; k\in\Z \right\} \subseteq \mathrm{Aut}(\H^2\setminus\{(0,0)\}) \;. $$
 Then
 $$X \;:=\; X_{p,\alpha,\beta,\lambda} \;:=\; \left. \H^2 \setminus\{(0,0)\} \middle\slash \Gamma \right.$$
 is a slice-quaternionic Hopf surface.
 More precisely, it admits affine structures if and only if it belongs to case {\itshape (A)}.
\end{thm}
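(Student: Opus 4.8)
The plan is to produce $X$ as a quotient via Proposition \ref{prop:group-action-objs}: it suffices to check that $\Gamma=\langle f\rangle$ is an infinite-cyclic group of slice-quaternionic automorphisms of $\H^2\setminus\{(0,0)\}$ acting freely and properly discontinuously. The first place where the hypotheses genuinely enter is slice-regularity of the \emph{whole} group. Since $f$ is an ordered polynomial with right coefficients it is slice regular, but as observed in Section \ref{sec:slice-reg-mfds} compositions need not be; one therefore computes $f^{\circ k}$ explicitly and checks it is again an ordered polynomial. In Case (B) this is exactly what forces $\beta\in\R$: only then does $(w\beta)^p=w^p\beta^p$ hold, giving the closed form $f^{\circ k}(z,w)=(z\beta^{kp}+k\,w^p\beta^{(k-1)p}\lambda,\;w\beta^k)$, which is slice regular, whereas for $\beta\notin\R$ one would be forced to pass to the regular extension and $\Gamma$ would fail to sit inside $\mathrm{Aut}(\H^2\setminus\{(0,0)\})$. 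In the sub-cases of (A) the map $f$ is an ordered polynomial of degree one, and affine maps are closed under composition and inversion, so the same point is immediate. The explicit inverse (using $\beta\in\R$ in Case (B)) shows $f$ is bijective with slice-regular inverse and fixes $(0,0)$, hence restricts to an automorphism of $\H^2\setminus\{(0,0)\}$; since $|\beta|<1$ we have $\beta^k\ne1$ for $k\ne0$, so $\Gamma\cong\Z$.

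Fixed-point freeness is then immediate from the triangular form: a fixed point of $f^{\circ k}$ ($k\ne0$) must satisfy $w\beta^k=w$, forcing $w=0$ because $|\beta^k|\ne1$, and then $z\alpha^k=z$ (respectively $z\beta^{kp}=z$) forces $z=0$; thus the only fixed point is the excluded origin. For proper discontinuity I would exploit that $0<|\alpha|\le|\beta|<1$ makes $f$ contracting away from $0$: the second coordinate of $f^{\circ k}(z,w)$ has modulus $|w|\,|\beta|^k$, and a direct estimate of the first coordinate (where the polynomial-in-$k$ factor appearing in Cases (B) and (A.3) is dominated by the geometric decay) gives $f^{\circ k}(x)\to(0,0)$ as $k\to+\infty$ and $|f^{\circ k}(x)|\to\infty$ as $k\to-\infty$, uniformly on compacta. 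Hence for compact $K_1,K_2\subseteq\H^2\setminus\{(0,0)\}$ only finitely many $k$ satisfy $f^{\circ k}(K_1)\cap K_2\ne\varnothing$. Proposition \ref{prop:group-action-objs} then yields the slice-quaternionic structure on $X$; since $\H^2\setminus\{(0,0)\}\simeq\mathbb S^7$ is simply connected and $\Gamma\cong\Z$ acts freely and properly discontinuously, $\H^2\setminus\{(0,0)\}$ is the universal cover and $\pi_1(X)\cong\Z$, so $X$ is a slice-quaternionic Hopf surface.

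For the affine dichotomy I would invoke Lemma \ref{lem:affine}. In every sub-case of (A) the map $f$ is an affine transformation of $\H^2$ — namely $f(z,w)=(z\alpha,w\alpha)$, $(z\alpha,w\beta)$, or $(z\alpha+w\lambda,w\alpha)$ — so taking the developing map to be the inclusion $\H^2\setminus\{(0,0)\}\hookrightarrow\H^2$ and $X_{f^{\circ k}}=f^{\circ k}$ verifies the criterion of Lemma \ref{lem:affine}; hence $X$ is affine.

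The substantial point is the converse in Case (B), where $p\ge2$ and the monomial $w^p\lambda$ is genuinely nonlinear. Suppose $X$ admitted an affine structure. By Lemma \ref{lem:affine} there would be a slice-regular immersion (developing map) $\psi\colon\H^2\setminus\{(0,0)\}\to\H^2$ and an affine $A$ with $\psi\circ f=A\circ\psi$. I would extend $\psi$ across the origin by a Hartogs-type removable-singularity argument for slice-regular functions (the deleted locus $\{0\}$ having real codimension $8$), obtaining $\hat\psi\colon\H^2\to\H^2$ with $\hat\psi\circ f=A\circ\hat\psi$. Since $f(0,0)=(0,0)$, the image $\hat\psi(0,0)$ is fixed by $A$, so after a translation we may take $A$ linear and $\hat\psi(0,0)=0$. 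As $\hat\psi$ is an immersion between manifolds of equal dimension, $D\hat\psi(0,0)$ is invertible; differentiating the equivariance at the origin identifies $A$, up to this linear conjugacy, with the linear part $Df(0,0)=\mathrm{diag}(\alpha,\beta)$ (recall $\alpha=\beta^p\in\R$). Thus $\phi:=D\hat\psi(0,0)^{-1}\circ\hat\psi$ would be a slice-regular map tangent to the identity conjugating $f$ to its linear part. Building such a $\phi$ term by term, the degree-$p$ step is a homological equation for eliminating $w^p\lambda$ whose coefficient is exactly $\alpha-\beta^p$; this vanishes by the standing relation $(\alpha-\beta^p)\lambda=0$, so with $\lambda\ne0$ the resonant term $w^p\lambda$ cannot be removed, contradicting the existence of $\phi$. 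Hence $X$ is not affine in Case (B). I expect this converse — in particular justifying the removable-singularity extension of $\psi$ and carrying the homological (Poincar\'e--Dulac) resonance obstruction through the slice-regular category — to be the main obstacle, whereas the verification of the hypotheses of Proposition \ref{prop:group-action-objs} is routine.
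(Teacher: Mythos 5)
Your construction argument coincides with the paper's: Case (A) is dispatched by Lemma \ref{lem:affine}, and Case (B) by Proposition \ref{prop:group-action-objs} after checking that $\beta\in\R$ is exactly what makes $\{f^{\circ k}\}$ a group of slice-regular maps (your closed form $f^{\circ k}(z,w)=(z\beta^{kp}+k\,w^p\beta^{(k-1)p}\lambda,\,w\beta^k)$ is the real-$\beta$ specialization of the paper's sum $\sum_{\ell+m=k-1}\beta^{\ell p}\lambda\beta^{mp}$), with the same fixed-point-freeness computation. Your proper-discontinuity argument via uniform contraction/expansion on compacta is if anything slightly more careful than the paper's, which reduces to compacta contained in annuli $\{r<|w|<R\}$ and so tacitly ignores points with $w=0$. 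The genuine divergence is the final assertion ``affine if and only if Case (A)'': the paper's proof establishes only the ``if'' direction (via Lemma \ref{lem:affine}) and offers no argument for the ``only if'', beyond the informal Remark \ref{rmk:affine-cplx} that the Case (B) structures appear to be new. You attempt this converse with a Poincar\'e--Dulac resonance argument: extend the developing map across the origin, reduce the holonomy to the linear part $\mathrm{diag}(\beta^p,\beta)$, and observe that the homological equation for killing $w^p\lambda$ is obstructed precisely because $\alpha-\beta^p=0$. This is the right idea, but as you yourself flag it is only a sketch: you would still need to justify that the developing map of an abstract affine structure is slice-regular and extends through the deleted origin as an immersion, that the affine holonomy element can indeed be normalized to the linear part of $f$, and that the term-by-term conjugation makes sense in the relevant (real-affine versus right-$\H$-linear) category. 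Since the paper itself supplies no proof of this direction, you are not missing anything relative to it, but you should either complete that sketch or state the converse as a claim requiring separate justification.
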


\begin{proof}
Case {\itshape (A)} is a consequence of Lemma \ref{lem:affine}.
Now we are reduced to prove case {\itshape (B)} by applying Proposition \ref{prop:group-action-objs}.
First, take $\beta\in\mathbb H$, possibly non-real.
By computing $f\lfloor_{\H\times L_{I_\beta}}^{\circ k}(z,w)$, we get
$$ \mathrm{ext}\left(f\lfloor_{\H\times L_{I_\beta}}^{\circ k}\right)(z,w) \;=\; \left( z \cdot \beta^{kp} + w^p\cdot\left( \sum_{\substack{\ell+m=k-1 \\ \ell,m\geq0 \text{ if } k\geq0 \\ \ell,m<0 \text{ if } k< 0}} \beta^{\ell\cdot p}\cdot\lambda\cdot\beta^{m\cdot p} \right) ,\, w\cdot \beta^k \right) \;. $$
In fact, because of the specific form of the function, this is just a regular extension of the second variable.
Note that, when $\beta\not\in\R$, then $\left\{\mathrm{ext}\left(f\lfloor_{\H\times L_{I_\beta}}^{\circ k}\right) \;:\; k\in\Z \right\}$ is not a group: indeed, since $f$ is not linear, the extension of the composition is different from the (non-slice regular) composition of extensions.
This motivates the choice for $\beta\in\R$.
For any $k\in\Z\setminus\{0\}$, the map $\mathrm{ext}\left(f\lfloor_{\H\times L_{I_\beta}}^{\circ k}\right)$ has no fixed point: otherwise, we would have $w=w\cdot \beta^k$ whence, by taking norms, $|w|=0$; therefore, we would have $z\cdot \beta^{kp}=z$ whence, by taking norms, $|z|=0$.
Moreover, $\Gamma$ is properly-discontinuous. Indeed, take $K_1$ and $K_2$ compact sets in $\H^2\setminus\{(0,0)\}$. We may assume that
$$ K_1 \;\subset\; \left\{ (z,w) \in \H^2 \;:\; r_1 < |w| < R_1 \right\} \qquad \text{ and } \qquad K_2 \;\subset\; \left\{ (z,w) \in \H^2 \;:\; r_2 < |w| < R_2 \right\} $$
where $0<r_1<R_1$ and $0<r_2<R_2$ are real numbers. Then, for any $k>\lg\frac{r_2}{r_1}/\lg|\beta|$, it holds $\mathrm{ext}\left(f\lfloor_{\H\times L_{I_\beta}}^{\circ k}\right)(K_1)\cap K_2=\varnothing$.
\end{proof}

\begin{rmk}\label{rmk:affine-cplx}
We point out that the quaternionic structure given to the Hopf surfaces in cases (A), being linear quaternionic, coincides with a complex structure on $\mathbb{C}^2$. Indeed, let $q=z+w\cdot j\in\mathbb{H}, \, a=\alpha+\beta\cdot j\in\mathbb{H}$, and $z,\,w,\,\alpha,\, \beta \in \mathbb{C}$, then:
$$q \cdot a= (z+w\cdot j)\cdot(\alpha + \beta\cdot j) =(z \cdot \alpha -w \cdot \overline{\beta}) + ( z\cdot\beta +w \cdot\overline{\alpha}) \cdot j\,, $$ 
{\itshape i.e.}, the right quaternionic multiplication by $\alpha+\beta\cdot j$ coincides with the map:
$$
\C^2 \;\ni\; (z,w) \mapsto  (z\cdot \alpha-w\cdot\overline{\beta},\, z\cdot\beta + w\cdot\overline{\alpha}) \;\in\; \C^2 \;,
$$
which in matrix form reads as:
$$
(z,w)\cdot \begin{pmatrix}
        \alpha  & \beta \\
        -\overline{\beta} &  \overline{\alpha}
        \end{pmatrix}
\;=\;
(z \cdot \alpha - w\cdot\overline{\beta}, \, z\cdot\beta +w\cdot\overline{\alpha})\;.
$$
If $\alpha\neq0$, the complex matrix $A= \begin{pmatrix}
                       \alpha & \beta \\
                       -\overline{\beta} & \overline{\alpha}
                 \end{pmatrix} $
has two different (complex conjugated) eigenvalues, hence $A$ is conjugated to a diagonal matrix.
Therefore the quaternionic actions described on the Hopf surfaces in cases (A.1) and (A.2) are particular diagonal complex actions on $\mathbb{C}^4$.
Similarly for the case (A.3).
On the other hand, the quaternionic structure on Hopf surfaces of case (B) are new, at our knowledge.
\end{rmk}

\begin{rmk}\label{rmk:other}
We wonder whether other slice-quaternionic structures on $\mathbb{S}^1\times\mathbb{S}^7$ may be constructed.

A first tentative could be by using the following extension result, that we recall.

Let $f\colon \H^2\to \H^2$ be a smooth function. Let $L_1,L_2$ be slices of $\H$ such that $f\lfloor_{L_1\times L_2}\colon \C^2 \simeq L_1\times L_2 \to \H^2$ is holomorphic. Then, any component $f^{(j)}\lfloor_{L_1\times L_2}\colon L_1\times L_2\to \mathbb H$ is analytic, let us say
$$ f^{(j)}\lfloor_{L_1\times L_2}(z_1,z_2) \;=:\; \sum_{k_1,k_2\in\N} z_1^{k_1}\cdot z_2^{k_2} \cdot a^{(j)}_{k_1,k_2} \;, $$
where $a^{(j)}_{k_1,k_2}\in \mathbb H$. We set
$$ f^* \;:=\; \mathrm{ext}(f\lfloor_{L_1\times L_2}) \;:=\; \left( \tilde f^{(1)} , \tilde f^{(2)} \right) \colon \H^2 \to \H^2 \;, $$
where
$$ \tilde f^{(j)}(z,w) \;:=\; \sum_{z,w\in\N} z^{k_1}\cdot w^{k_2} \cdot a^{(j)}_{k_1,k_2} \;. $$
Then $\mathrm{ext}(f\lfloor_{L_1\times L_2})$ is a slice regular function in the sense of \cite{ghiloni-perotti} such that
$$ \mathrm{ext}(f\lfloor_{L_1\times L_2})\lfloor_{L_1\times L_2} \;=\; f\lfloor_{L_1\times L_2} \;. $$

As we learnt from Alessandro Perotti, we have the following representation formula.

Define $f^*\colon \H^2\to \H^2$ as follows. For any
$$ x \;:=\; \left(x_1,\,x_2\right) \;=\; \left(\alpha_1+J_1\beta_1,\,\alpha_2+J_2\beta_2\right) \;\in\; \H^2 \;,$$
consider
$$ \left(y_1,\,y_2\right) \;:=\; \left(\alpha_1+I_1\beta_1,\,\alpha_2+I_2\beta_2\right) \;\in\; L_{I_1}\times L_{I_2} \;\subset\; \H^2 \;.$$
Denote, {\itshape e.g.}, by $y_1^c=(\alpha_1+I_1\beta_1)^c:=\alpha_1-I_1\beta_1$ the complex conjugate of $y_1\in L_{I_1}\simeq\C$ with respect to the complex structure $I_1$.
Define
\begin{eqnarray*}
4\cdot f^*(x) &:=& f(y_1,y_2) + f(y_1^c,y_2) + f(y_1, y_2^c) + f(y_1^c,y_2^c)  \\[5pt]
 && - J_1I_1 f(y_1,y_2) + J_1I_1 f(y_1^c,y_2) - J_1I_1 f(y_1,y_2^c) + J_1I_1 f(y_1^c,y_2^c) \\[5pt]
 && - J_2I_2 f(y_1,y_2) - J_2 I_2 f(y_1^c,y_2) + J_2I_2 f(y_1,y_2^c) + J_2I_2 f(y_1^c,y_2^c) \\[5pt]
 && + J_1J_2I_2I_1 f(y_1,y_2) - J_1J_2I_2I_1 f(y_1^c,y_2) - J_1J_2I_2I_1 f(y_1,y_2^c) + J_1J_2I_2I_1 f(y_1^c,y_2^c) \;,
\end{eqnarray*}
where $\mathbb{H}^2$ is endowed with a structure of left-$\mathbb{H}$-module by $a\cdot(b,c)=(a\cdot b,a\cdot c)$.
Then $f^*$ is the extension as before.
\end{rmk}

\begin{rmk}
 We recall that the Hopf surfaces are related to Hopf fibrations, which are important in twistor theory, see, {\itshape e.g.}, \cite{gentili-salamon-stoppato}. More precisely, consider the fibration:
 $$
 \xymatrix{
 \mathbb{S}^3 \ar@{^{(}->}[r] & \mathbb{S}^7 \ar[d]^{p} \\
 & \mathbb{H}\mathbb{P}^1
 \;,
 }
 \qquad \text{ whence } \qquad
  \xymatrix{
 \mathbb{S}^3\times\mathbb{S}^1 \ar@{^{(}->}[r] & \mathbb{S}^7 \times \mathbb{S}^1 \ar[d]^{p} \\
 & \mathbb{H}\mathbb{P}^1
 \;.
 }
 $$
 Take local charts for $U_1:=\{[z^1:z^2]\in\mathbb{HP}^1 \;:\; z^1\neq0\}\subseteq\mathbb{H}\mathbb{P}^1$ and $U_2:=\{[z^1:z^2]\in\mathbb{HP}^1 \;:\; z^2\neq0\}\subseteq\mathbb{H}\mathbb{P}^1$. The maps
 $$ \psi_1([1:\zeta^2_1],a) \;=\; \left( \frac{a}{\left(1+|\zeta_1^2|^2\right)^{1/2}},\; \frac{\zeta_1^2 \cdot a}{\left(1+|\zeta_1^2|^2\right)^{1/2}} \right) $$
 and
 $$ \psi_2([\zeta^1_2:1],a) \;=\; \left( \frac{\zeta_2^1 \cdot a}{\left(1+|\zeta_2^1|^2\right)^{1/2}} ,\, \frac{a}{\left(1+|\zeta_2^1|^2\right)^{1/2}} \right) $$
 yield diffeomorphisms
 $$
  \psi_1 \colon U_1 \times \mathbb{S}^3 \stackrel{\simeq}{\to} p^{-1}(U_1)
  \qquad \text{ and } \qquad
  \psi_2 \colon U_2 \times \mathbb{S}^3 \stackrel{\simeq}{\to} p^{-1}(U_2) \;.
 $$
\end{rmk}

\section{Automorphisms of slice-quaternionic Hopf surfaces}\label{sec:aut}

In this section, we compute the group of automorphisms of the slice-quaternionic Hopf surfaces in Theorem \ref{thm:hopf-quat}.
(For results in the holomorphic context, see \cite{namba, wehler}.)

\begin{thm}\label{thm:aut-hopf}
Let $X = \left. \H^2 \setminus\{(0,0)\} \middle\slash \langle f \rangle \right.$ be a slice-quaternionic Hopf surface. The group of automorphisms of $X$ is as follows.
\begin{description}
 \item[Case A.1.] In case $f(z,w)=\left(z\cdot \alpha, w\cdot \alpha\right)$ for $0<|\alpha|<1\colon$
 \begin{eqnarray*}
 \mathrm{Aut}(X) &=& \left\{ \varphi(z,w)=\left(z\cdot a_{1,0}+w\cdot a_{0,1},\, z\cdot b_{1,0}+w\cdot b_{0,1}\right) \right. \\[5pt]
 && \left. \left. \;:\; a_{1,0}, a_{0,1}, b_{1,0}, b_{0,1} \in L_{I_\alpha} \text{ such that } b_{0,1} \cdot a_{1,0} - b_{1,0} \cdot a_{0,1} \neq 0 \right\} \middle\slash \langle f \rangle \right. \;;
 \end{eqnarray*}
 in particular, $\dim_\R \mathrm{Aut}(X)\in\{8,16\}$.

 \item[Case A.2.1.] In case $f(z,w)=\left(z\cdot \alpha, w\cdot \beta\right)$ for $0<|\alpha|<|\beta|<1\colon$
 \begin{eqnarray*}
 \mathrm{Aut}(X) &=& \left\{ \varphi(z,w)=\left(z\cdot a_{1,0},\, w\cdot b_{0,1}\right) \right. \\[5pt]
 && \left. \left. \;:\; a_{1,0} \in L_{I_\alpha}, b_{0,1} \in L_{I_\beta} \text{ such that } b_{0,1} \cdot a_{1,0} \neq 0 \right\} \middle\slash \langle f \rangle \right. \;;
 \end{eqnarray*}
 in particular, $\dim_\R\mathrm{Aut}(X)\in\{4,6,8\}$.

\item[Case A.2.2.] In case $f(z,w)=\left(z\cdot \alpha, w\cdot \beta\right)$ for $0<|\alpha|=|\beta|<1$ and $\alpha\neq\beta\colon$
 \begin{eqnarray*}
 \mathrm{Aut}(X) &=& \left\{ \varphi(z,w)=\left(z\cdot a_{1,0} + w \cdot a_{0,1},\, z\cdot b_{1,0} + w\cdot b_{0,1}\right) \right. \\[5pt]
 && \left. \left. \;:\; a_{1,0} \in L_{I_\alpha}, b_{0,1} \in L_{I_\beta}, \beta\cdot a_{0,1}=a_{0,1}\cdot\alpha, \alpha\cdot b_{1,0}=b_{1,0}\cdot \beta \right.\right. \\[5pt]
 && \left.\left.\text{ such that } b_{1,0} \cdot ( a_{0,1} - b_{0,1} \cdot b_{1,0}^{-1} \cdot a_{1,0} ) \neq 0 \right\} \middle\slash \langle f \rangle \right. \;;
 \end{eqnarray*}
 in particular, $\dim_\R\mathrm{Aut}(X)\in\{4,6,8\}$.
 
 \item[Case A.3.] In case $f(z,w)=\left(z\cdot \alpha+w\cdot \lambda, w\cdot \alpha\right)$ for $\lambda\in\H\setminus\{0\}$ and $0<|\alpha|<1\colon$
 \begin{eqnarray*}
 \mathrm{Aut}(X) &=& \left\{ \varphi(z,w)=\left(z\cdot a_{1,0}+w\cdot a_{0,1},\, z\cdot b_{1,0}+w\cdot b_{0,1}\right) \right. \\[5pt]
 && \left. \left. \;:\; \text{ equations \eqref{eq:serie-case-a-3} hold } \right\} \middle\slash \langle f \rangle \right. \;;
 \end{eqnarray*}
 in particular, $\dim_\R\mathrm{Aut}(X)\in\{4,8\}$.

 \item[Case B.] In case $f(z,w)=\left(z\cdot \beta^p+w^p\cdot \lambda, w\cdot \beta\right)$ for $p\in\mathbb{N}\setminus\{0,1\}$, $\lambda\in\H\setminus\{0\}$ and $0<|\beta|<1$, $\beta\in\R\colon$
 \begin{eqnarray*}
 \mathrm{Aut}(X) &=& \left\{ \varphi(z,w)=\left( z\cdot b_{0,1}^p + w^p \cdot a_{0,p} ,\, w \cdot b_{0,1} \right) \right. \\[5pt]
 && \left. \left. \;:\; b_{0,1}\in\R, a_{0,p} \in \H \text{ such that } b_{0,1}\neq 0 \right\} \middle\slash \langle f \rangle \right. \;;
 \end{eqnarray*}
 in particular, $\dim_\R\mathrm{Aut}(X)=5$.
\end{description}
\end{thm}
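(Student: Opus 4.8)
The plan is uniform across the cases and follows the template used for complex Hopf surfaces (compare \cite{namba, wehler}); the genuinely quaternionic phenomena appear only in case {\itshape (B)}.

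\emph{Reduction to centralizing lifts.} First I would apply Proposition \ref{prop:group-action-maps} with $X_1=X_2=\H^2\setminus\{(0,0)\}$ and $G_1=G_2=\langle f\rangle$: every automorphism $\varphi$ of $X$ lifts to a slice-regular automorphism $\Phi$ of $\H^2\setminus\{(0,0)\}$ that normalizes $\langle f\rangle$, and two lifts differ by an element of $\langle f\rangle$, so $\mathrm{Aut}(X)=N/\langle f\rangle$ with $N$ the group of such $\Phi$. Next I would remove the puncture: since $\{(0,0)\}$ has real codimension $8$ and $\Phi$, $\Phi^{-1}$ are real-analytic, both extend slice-regularly across the origin, and bijectivity forces $\Phi(0,0)=(0,0)$. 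Normalizing $\langle f\rangle\cong\Z$ means $\Phi\circ f=f^{\circ\varepsilon}\circ\Phi$ with $\varepsilon\in\{+1,-1\}$; differentiating at the common fixed point $(0,0)$ yields $D\Phi_0\,(Df_0)\,(D\Phi_0)^{-1}=(Df_0)^{\varepsilon}$, and since the real differential $Df_0$ is a contraction, its eigenvalue moduli being $|\alpha|,|\beta|<1$ (respectively $|\beta^p|,|\beta|<1$ in case {\itshape (B)}), whereas $(Df_0)^{-1}$ is an expansion, invariance of the spectrum under conjugation forces $\varepsilon=+1$. Hence $N$ is exactly the group of slice-regular automorphisms of $\H^2$ fixing the origin and commuting with $f$.

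\emph{Coefficient equations and cases {\itshape (A)}.} I would then write $\Phi=(\Phi_1,\Phi_2)$ with slice power series $\Phi_h(z,w)=\sum_{k,l}z^k w^l a^{(h)}_{k,l}$ (no constant term) and read $\Phi\circ f=f\circ\Phi$ as a graded system of homological equations for the coefficients $a^{(h)}_{k,l}\in\H$, the grading being by the weight $pk+l$ (by $k+l$ in the linear cases), which is preserved by the coupling coming from the $w^p\cdot\lambda$ term. Because $0<|\alpha|\le|\beta|<1$, the resonance conditions placing the diagonal term of each equation in the kernel can hold only at finitely many low weights, so all but finitely many coefficients vanish. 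In cases {\itshape (A)} the map $f$ is linear (a Jordan block in {\itshape (A.3)}), the weight is $k+l$, and the surviving conditions are the intertwining relations $a_{1,0}\alpha=\alpha a_{1,0}$, $b_{0,1}\beta=\beta b_{0,1}$, $a_{0,1}\alpha=\beta a_{0,1}$, $b_{1,0}\beta=\alpha b_{1,0}$, and so on. The dimension lists are read off from these: a diagonal coefficient ranges in the centralizer $L_{I_\alpha}$ (respectively $L_{I_\beta}$), which is $4$-real-dimensional when the eigenvalue is real and $2$-dimensional otherwise — this already gives $\{8,16\}$ in {\itshape (A.1)} — while the off-diagonal intertwiners have nontrivial solutions exactly when the corresponding moduli coincide, which separates {\itshape (A.2.1)} from {\itshape (A.2.2)} and, together with the reality alternative, produces the ranges $\{4,6,8\}$ and $\{4,8\}$. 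Invertibility is the open nonvanishing of the relevant determinant-type expression and does not affect dimensions.

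\emph{Case {\itshape (B)} and the main obstacle.} Here I would use the two quaternionic inputs. Since $\beta\in\R$ and $\lambda\in L_{I_\lambda}$, the map $f$ preserves the slice $L_{I_\lambda}\times L_{I_\lambda}\cong\C^2$, on which it is honestly holomorphic; restricting the identity $\Phi\circ f=f\circ\Phi$ to this slice lets me match coefficients as in the complex case. The non-resonance $\beta^{pk+l}\ne\beta$ for $pk+l\ne1$ and $\beta^{pk+l}\ne\beta^p$ for $pk+l\ne p$ kills every coefficient except at the resonant weights, leaving $\Phi_2(z,w)=w\cdot b_{0,1}$ and $\Phi_1(z,w)=z\cdot a_{1,0}+w^p\cdot a_{0,p}$ with $a_{0,p}\in\H$ free and $\lambda\cdot a_{1,0}=b_{0,1}^{\,p}\cdot\lambda$; holomorphy on the slice forces $b_{0,1}\in L_{I_\lambda}$, whence $a_{1,0}=b_{0,1}^{\,p}$. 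This determines $\Phi$ only up to the imaginary part of $b_{0,1}$ inside $L_{I_\lambda}$. The principal obstacle is precisely that this single-slice computation is not enough — because the composition of slice-regular maps is not slice-regular, the identity must also be imposed on a \emph{generic} slice. Carrying this out reduces the first component of $\Phi\circ f=f\circ\Phi$ to the requirement $(w\cdot b_{0,1})^p=w^p\cdot b_{0,1}^{\,p}$ for all $w\in\H$, and since this holds for every $w$ only when $b_{0,1}$ is central, it finally forces $b_{0,1}\in\R$. A direct inversion exhibits $\Phi$ as a slice-regular automorphism commuting with $f$ (so it descends and normalizes $\langle f\rangle$), and the parameter count $b_{0,1}\in\R\setminus\{0\}$ and $a_{0,p}\in\H$, unchanged by the discrete quotient, gives $\dim_\R\mathrm{Aut}(X)=1+4=5$.
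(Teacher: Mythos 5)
Your proposal is correct in outline and, for cases \emph{(A)}, follows essentially the same route as the paper: lift via Proposition \ref{prop:group-action-maps}, extend across the origin (the paper does this by the Hartogs phenomenon of \cite{ghiloni-perotti, colombo-sabadini-struppa} rather than a codimension argument, which is the cleaner tool in the slice-regular category), expand $\Phi$ as an ordered power series with right coefficients, and kill all non-resonant coefficients by comparing norms against $0<|\alpha|\le|\beta|<1$, leaving the intertwining relations $a_{1,0}\alpha=\alpha a_{1,0}$, $a_{0,1}\alpha=\beta a_{0,1}$, etc., whose solution spaces give the stated dimension lists. Two points where you genuinely diverge: first, you address the normalizer-versus-centralizer issue ($\Phi\circ f=f^{\circ\varepsilon}\circ\Phi$ with $\varepsilon=\pm1$) and rule out $\varepsilon=-1$ by the contraction argument at the fixed point; the paper silently assumes commutation, so this is a welcome addition. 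Second, in case \emph{(B)} the paper restricts the identity to the real curves $\gamma_\mu=\{(z^p,\mu\cdot z):z\in\H\}$, $\mu\in\R$, and extracts the coefficient equations by comparing powers of $z$ and $\mu$ together with slice-regularity of the left-hand side, whereas you restrict to the invariant complex slice $L_{I_\lambda}\times L_{I_\lambda}$ and then invoke the full quaternionic identity to force $b_{0,1}\in\R$ via $(w\cdot b_{0,1})^p=w^p\cdot b_{0,1}^p$ for all $w$. Your final step is correct and is the right mechanism (it is exactly where the quaternionic rigidity enters), but be careful with the intermediate claim that on the slice you can ``match coefficients as in the complex case'': the term $\Phi_2^{\,p}\cdot\lambda$ in $f\circ\Phi$ is a $p$-th power of $P+Q\cdot J$ with $P,Q$ holomorphic on the slice, and such powers involve conjugates of $P$ and $Q$, so the first component is \emph{not} holomorphic on the slice before you know $\Phi_2$ is linear. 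You should therefore run the resonance argument on the second component first (which contains no nonlinear terms in $\Phi$ and does yield $\Phi_2(z,w)=w\cdot b_{0,1}$ by the weight-$(pk+l)$ induction), and only then treat the first component, after which your centrality argument for $b_{0,1}$ and the count $\dim_\R\mathrm{Aut}(X)=1+4=5$ go through.
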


\begin{proof}
Let $X = \left. \H^2 \setminus\{(0,0)\} \middle\slash \langle f \rangle \right.$.
Let $\varphi \colon X \to X$ be an automorphism of $X$. By Proposition \ref{prop:group-action-maps}, it is induced by an automorphism $\Phi\colon \H^2\setminus\{(0,0)\} \to \H^2\setminus\{(0,0)\}$ of the universal covering of $X$. By the Hartogs extension phenomenon for slice regular functions, \cite[Theorem 2]{ghiloni-perotti}, see also \cite[Corollary 4.9]{colombo-sabadini-struppa}, removability of singularity yields an automorphism $\Phi\colon \H^2\to\H^2$. By \cite[Corollary 2]{ghiloni-perotti}, the set of slice regular functions on $\H^2$ coincides with the one of convergent ordered power series (with right coefficients). Whence we get that
$$ \Phi(z,w) \;=\; \left( \sum_{h,k\geq0} z^h\cdot w^k\cdot a_{h,k} ,\; \sum_{h,k\geq0} z^h\cdot w^k\cdot b_{h,k} \right) \;, \qquad \text{ with } a_{0,0}\;=\;b_{0,0}\;=\;0 \;, $$
which satisfies
\begin{equation}\label{eq:comm}
\Phi \circ f \;=\; f \circ \Phi \;.
\end{equation}
We consider separately each case.

\medskip

\begin{description}
 \item[Case A.1.] {\em Consider the case $f(z,w)=\left(z\cdot \alpha, w\cdot \alpha\right)$ for $0<|\alpha|<1$.}

 \noindent By imposing \eqref{eq:comm}, we get
 \begin{eqnarray*}
 \sum_{h,k\geq0} (z\cdot \alpha)^h \cdot (w\cdot \alpha)^k \cdot a_{h,k} &=& \sum_{h,k\geq0} z^h\cdot w^k\cdot a_{h,k}\cdot\alpha \;,\\[5pt]
 \sum_{h,k\geq0} (z\cdot \alpha)^h \cdot (w\cdot \alpha)^k \cdot b_{h,k} &=& \sum_{h,k\geq0} z^h\cdot w^k\cdot b_{h,k}\cdot\alpha \;.
 \end{eqnarray*}
 
 Suppose $\alpha\not\in\R$. Since the right-hand sides are slice regular series, the left-hand sides have to be slice regular, too: that is, the non-real coefficients have to be on the right. In particular, it follows that
 $$ a_{h,k} \;=\; b_{h,k} \;=\; 0 \qquad \text{ for any } h+k\neq1 \;. $$

 Suppose now $\alpha\in\R$. Then we get
 $$
 \sum_{h,k\geq0} z^h \cdot w^k\cdot\alpha^{h+k} \cdot b_{h,k} \;=\; \sum_{h,k\geq0} z^h\cdot w^k\cdot b_{h,k}\cdot\alpha \;,
 $$
 whence, for any $h,k\in\N$,
 $$ 
 \alpha^{h+k} \cdot a_{h,k} \;=\; a_{h,k}\cdot\alpha
 \qquad \text{ and } \qquad
 \alpha^{h+k} \cdot b_{h,k} \;=\; b_{h,k}\cdot\alpha \;.
 $$
 Since $\alpha$ is real and by equalling the norms, we get that, for any $h,k\in\N$,
 $$
 |\alpha|^{h+k-1} \cdot |a_{h,k}| \;=\; |a_{h,k}|
 \qquad \text{ and } \qquad
 |\alpha|^{h+k-1} \cdot |b_{h,k}| \;=\; |b_{h,k}| \;.
 $$
 Since $|\alpha|<1$, therefore we get again
 $$ a_{h,k} \;=\; b_{h,k} \;=\; 0 \qquad \text{ for any } h+k\neq1 \;. $$

 Moreover, in both cases,
 $$
 a_{1,0}\cdot\alpha \;=\; \alpha\cdot a_{1,0}\;, \qquad
 a_{0,1}\cdot\alpha \;=\; \alpha\cdot a_{0,1}\;,
 $$
 $$
 b_{1,0}\cdot\alpha \;=\; \alpha\cdot b_{1,0}\;, \qquad
 b_{0,1}\cdot\alpha \;=\; \alpha\cdot b_{0,1}\;,
 $$
 that is,
 $$ a_{1,0},\; a_{0,1},\; b_{1,0},\; b_{0,1} \;\in\; L_{I_\alpha} \;. $$

 We recall, by \cite[Proposition 2.1]{bisi-gentili-moebius}, \cite{bisi-gentili-trends} and \cite{bisi-stoppato}, that $\Phi$ is (right-)invertible if and only if
 \begin{equation}\label{eq:invertibilita}
 b_{0,1} \cdot \left( a_{1,0} - b_{1,0} \cdot b_{0,1}^{-1} \cdot a_{0,1} \right) \;\neq\; 0
 \qquad \text{ or } \qquad
 a_{0,1} \cdot \left( b_{1,0} - a_{1,0} \cdot a_{0,1}^{-1} \cdot b_{0,1} \right) \;\neq\; 0
 \;.
 \end{equation}
 Finally we get that $\Phi$ is an automorphism if and only if
 $$
  b_{0,1} \cdot a_{1,0} - b_{1,0} \cdot a_{0,1} \;\neq\; 0 \;.
 $$
 
 Therefore $\dim_\R \mathrm{Aut}(X)\in\{8,16\}$, according to $\alpha\in\mathbb H \setminus\R$, respectively $\alpha\in\R$.

\medskip

 \item[Case A.2.] {\em Consider the case $f(z,w)=\left(z\cdot \alpha, w\cdot \beta\right)$ for $0<|\alpha|\leq|\beta|<1$ and $\alpha\neq\beta$.}
 
 \noindent By imposing \eqref{eq:comm}, we get
 \begin{eqnarray}\label{eq:serie-case-a-2}
 \sum_{h,k\geq0} (z\cdot \alpha)^h \cdot (w\cdot \beta)^k \cdot a_{h,k} &=& \sum_{h,k\geq0} z^h\cdot w^k\cdot a_{h,k}\cdot\alpha \;,\\[5pt]
 \sum_{h,k\geq0} (z\cdot \alpha)^h \cdot (w\cdot \beta)^k \cdot b_{h,k} &=& \sum_{h,k\geq0} z^h\cdot w^k\cdot b_{h,k}\cdot\beta \;.\nonumber
 \end{eqnarray}

 \smallskip

 Suppose first that both $\alpha$ and $\beta$ are non-real. Since the right-hand side is a slice regular series, the left-hand side has to be slice regular, too. 
This yields the following conditions on $a_{h,k}$ and $b_{h,k}:$
$$
a_{h,k} =0 \,\,\,\, \textrm{for any}  \,\,\,\, (h,k) \neq \{ (1,0), (0,1) \},
$$ 
$$
b_{h,k} =0 \,\,\,\, \textrm{for any}  \,\,\,\, (h,k) \neq \{ (1,0), (0,1) \}.
$$
In the other cases, when either $\alpha$ or $\beta$ or both are real, with similar arguments as before, we recover the same conditions.

Now, we distinguish two cases.
 \item[Case A.2.1.] 
 In case $|\alpha|\neq|\beta|$, by equalling the norms, we get that
 $$ a_{0,1} \;=\; 0, $$
 $$ b_{1,0} \;=\; 0. $$
 Moreover,
 $$
 a_{1,0}\cdot\alpha \;=\; \alpha\cdot a_{1,0}\;, \quad
 b_{0,1}\cdot\beta \;=\; \beta\cdot b_{0,1}\;.
 $$
 that is,
 $$ a_{1,0} \;\in\; L_{I_\alpha}\;,\qquad b_{0,1} \;\in\; L_{I_\beta} \;. $$
Finally, we get that $\Phi$ is an automorphism if and only if
 $$
  b_{0,1} \cdot a_{1,0} \;\neq\; 0 \;.
 $$
In the general case that both $\alpha$ and $\beta$ are non-real, then $\dim_{\mathbb{R}} \mathrm{Aut}(X) =4.$ If only one among $\alpha$ and $\beta$ is real, then $\dim_{\mathbb{R}} \mathrm{Aut}(X)=6$ and if both $\alpha$ and $\beta$, are real then $\dim_{\mathbb{R}} \mathrm{Aut}(X)=8$.
\item[Case A.2.2.]
In case $|\alpha|=|\beta|$, by equalling the norms, we always get that
 $$ a_{h,k} \;=\; 0 \qquad \text{ for any } h+k\neq1 \;, $$
 $$ b_{h,k} \;=\; 0 \qquad \text{ for any } h+k\neq1 \;. $$
 Moreover,
 $$
 a_{1,0}\cdot\alpha \;=\; \alpha\cdot a_{1,0}\;, \quad
 b_{0,1}\cdot\beta \;=\; \beta\cdot b_{0,1}\;,
 $$
 that is,
 $$ a_{1,0} \;\in\; L_{I_\alpha}\;,\qquad b_{0,1} \;\in\; L_{I_\beta} \;; $$
 but also
 $$
 a_{0,1}\cdot\alpha \;=\; \beta\cdot a_{0,1}\;, \quad
 b_{1,0}\cdot\beta \;=\; \alpha\cdot b_{1,0}\;.
 $$
 Note that the map, {\itshape e.g.}, $a_{0,1} \mapsto \beta\cdot a_{0,1}\cdot\alpha^{-1}$ is given by the composition of two rotations along two orthogonal planes, with angles given by the sum, respectively the difference, of the arguments of $\alpha$ and $\beta$. Then its fixed-locus is either a point or a plane: it is a plane iff $\beta=\overline{\alpha},$ and it is a point in all remaining cases.
 
 Hence if $\alpha, \beta \in \mathbb{H} \setminus \mathbb{R}$ and $\alpha = \overline{\beta}$ then $\dim_{\mathbb{R}} \mathrm{Aut}(X) = 8$; if $\alpha, \beta \in \mathbb{H} \setminus \mathbb{R}$ and $\alpha \neq \overline{\beta}$ then $\dim_{\mathbb{R}} \mathrm{Aut}(X) = 4$;  if $\alpha,$ or $\beta,$ or both, are real, $a_{0,1}=b_{1,0}=0$ and $\dim_{\mathbb{R}} \mathrm{Aut}(X)$ is equal to $6$, respectively $8$.
 
 Finally, we get that $\Phi$ is an automorphism if and only if
 $$
  b_{1,0} \cdot ( a_{0,1} - b_{0,1} \cdot b_{1,0}^{-1} \cdot a_{1,0} ) \;\neq\; 0 \;.
 $$

 \medskip
 
 \item[Case A.3.] {\em Consider the case $f(z,w)=\left(z\cdot \alpha+w \cdot \lambda, w\cdot \alpha\right)$ for $\lambda\in\H\setminus\{0\}$ and $0<|\alpha|<1$.}
  
 \noindent By imposing \eqref{eq:comm}, we get
 \begin{eqnarray*}
 \sum_{h,k\geq0} (z\cdot \alpha + w \cdot \lambda)^h \cdot (w\cdot \alpha)^k \cdot a_{h,k} &=& \sum_{h,k\geq0} z^h\cdot w^k\cdot (a_{h,k}\cdot\alpha + b_{h,k}\cdot\lambda) \;,\\[5pt]
 \sum_{h,k\geq0} (z\cdot \alpha + w \cdot \lambda)^h \cdot (w\cdot \alpha)^k \cdot b_{h,k} &=& \sum_{h,k\geq0} z^h\cdot w^k\cdot b_{h,k}\cdot\alpha \;.\nonumber
 \end{eqnarray*}
 
 We restrict it to the quaternionic line $r_\mu := \{(z,z \cdot \mu) \;:\; z\in\mathbb H\}$ where $\mu\in\R$ is any fixed real number:
 \begin{eqnarray*}
 \sum_{h,k\geq0} (z\cdot(\alpha + \mu \cdot \lambda))^h \cdot (z\cdot \mu \cdot \alpha)^k \cdot a_{h,k} &=& \sum_{h,k\geq0} z^h\cdot (z\cdot \mu)^k\cdot (a_{h,k}\cdot\alpha + b_{h,k}\cdot\lambda) \;,\\[5pt]
 \sum_{h,k\geq0} (z\cdot (\alpha + \mu \cdot \lambda))^h \cdot (z\cdot \mu \cdot \alpha)^k \cdot b_{h,k} &=& \sum_{h,k\geq0} z^h\cdot (z\cdot \mu)^k\cdot b_{h,k}\cdot\alpha \;.\nonumber
 \end{eqnarray*}
 
 \smallskip

 Suppose first that $\alpha$ is non-real. For $\mu$ in a dense subset of $\R$, it holds that also $\alpha+\mu\cdot\lambda\not\in\R$.
 We notice that the right-hand sides are slice regular. Notwithstanding, the left-hand sides are slice regular if and only if
 $$ a_{h,k} \;=\; 0 \qquad \text{ for any } h+k\neq1 \;, $$
 $$ b_{h,k} \;=\; 0 \qquad \text{ for any } h+k\neq1 \;. $$
 
 So we are reduced to:
 \begin{eqnarray*}
 (\alpha + \mu \cdot \lambda) \cdot a_{1,0}
 + \mu \cdot \alpha \cdot a_{0,1}
 &=&
 (a_{1,0}\cdot\alpha + b_{1,0}\cdot\lambda)
 + \mu \cdot (a_{0,1}\cdot\alpha + b_{0,1}\cdot\lambda)
 \;,\\[5pt]
 (\alpha + \mu \cdot \lambda) \cdot b_{1,0}
 + \mu \cdot \alpha \cdot b_{0,1}
 &=& b_{1,0}\cdot\alpha
 + \mu \cdot b_{0,1}\cdot\alpha \;.
 \end{eqnarray*}
 By equalling the coefficients in the polynomial in $\mu$, we get:
 \begin{eqnarray}\label{eq:serie-case-a-3}
 \lambda \cdot a_{1,0}
 + \alpha \cdot a_{0,1}
 &=&
 a_{0,1}\cdot\alpha
 + b_{0,1}\cdot\lambda
 \;,\\[5pt]
 \alpha \cdot a_{1,0} &=&
 a_{1,0} \cdot\alpha
 + b_{1,0}\cdot\lambda \;, \nonumber \\[5pt]
 \lambda \cdot b_{1,0}
 + \alpha \cdot b_{0,1}
 &=& b_{0,1}\cdot\alpha \;, \nonumber \\[5pt]
 \alpha \cdot b_{1,0}
 &=& b_{1,0}\cdot\alpha \;. \nonumber 
 \end{eqnarray}
 The last condition gives:
 \begin{itemize}
  \item $b_{1,0}\in L_{I_{\alpha}}\simeq\C$.
 \end{itemize}
 The other conditions give:
 \begin{itemize}
  \item $b_{0,1}$ as fixed point of $X \mapsto \alpha \cdot X \cdot \alpha^{-1} + \lambda \cdot b_{1,0} \cdot \alpha^{-1}$,
  \item $a_{1,0}$ as fixed point of $X \mapsto \alpha \cdot X \cdot \alpha^{-1} - b_{1,0} \cdot \lambda \cdot \alpha^{-1}$,
  \item $a_{0,1}$ as fixed point of $X \mapsto \alpha \cdot X \cdot \alpha^{-1} + \lambda \cdot a_{1,0} \cdot \alpha^{-1} - b_{0,1} \cdot \lambda \cdot \alpha^{-1}$.
 \end{itemize}
 We add further the condition of invertibility \eqref{eq:invertibilita}.
 
The dimension of the group of automorphisms in this case can be computed reasoning in the following way.
\begin{itemize}
\item If $\lambda \cdot b_{1,0} \cdot \alpha^{-1} \in L_{I_\alpha}$, equivalently, $\lambda\in L_{I_\alpha}$ in case $b_{1,0}\neq0$, (for example, if $\lambda \in \mathbb{R}$,) then $b_{0,1}$ cannot be in $L_{I_\alpha}^{\perp}$ because it is a 2-plane invariant by the rotation $X \to \alpha \cdot X \cdot \alpha^{-1}$ therefore no point on it can be fixed by such a rotation followed by a translation of an element in $L_{I_{\alpha}}.$ For the same reason, the component of $b_{0,1}$ along $L_{I_{\alpha}}^{\perp}$ has to be zero. On the other hand, $b_{0,1}$ can be in $L_{I_{\alpha}}$ if and only if $b_{1,0}=0$ because the rotation $X \to \alpha \cdot X \cdot \alpha^{-1}$ fixes point by point the elements of $L_{I_\alpha}$; in this case $b_{0,1}$ can be chosen arbitrarily in $L_{I_{\alpha}}.$ 
In this case also $a_{1,0}$ can be chosen arbitrarily in $L_{I_{\alpha}}$ because $b_{1,0}$ is zero also in the second map, and from the third map it follows that $a_{1,0}=b_{0,1}$ and $a_{0,1}$ is arbitrarily chosen in $L_{I_{\alpha}}.$ Hence $\dim_{\mathbb{R}} \mathrm{Aut} (X) = 4$.
\item If $\lambda$ is such that $\lambda \cdot b_{1,0} \cdot \alpha^{-1} \in L_{I_{\alpha}}^{\perp}$, (equivalently, $\lambda \in L_{I_{\alpha}}^{\perp}$ in case $b_{1,0}\neq0$,) then the map $X \mapsto \alpha \cdot X \cdot \alpha^{-1} +\lambda \cdot b_{1,0} \cdot \alpha^{-1}$ is a roto-translation of the invariant $2$-plane $L_{I_{\alpha}}^{\perp}$ which has a unique fixed point on that $2$-plane and in this case the component of $b_{0,1}$ along $L_{I_\alpha}^\perp$ has to be this fixed point which depends on $b_{1,0}$. Notice that also $b_{1,0}\cdot\lambda\cdot\alpha^{-1}$ is in $L_{I_{\alpha}}^\perp$, so, for the same reason, also the component of $a_{1,0}$ along $L_{I_{\alpha}}^{\perp}$ has to be the unique fixed point of a roto-translation of the invariant $2$-plane $L_{I_{\alpha}}^{\perp}$  depending on $b_{1,0}$. Finally, the last equation yields the condition that the component along $L_{I_\alpha}$ of $\lambda\cdot a_{1,0}-b_{0,1}\cdot\lambda$ is zero. 
Since $\lambda \in L_{I_{\alpha}}^\perp$, this last condition is translated in $\lambda\cdot a_{1,0}^{\perp, \alpha} -b_{0,1}^{\perp, \alpha} \cdot\lambda=0$ where $a_{1,0}^{\perp, \alpha}$ $b_{0,1}^{\perp, \alpha}$ are respectively the components of $a_{1,0}$ and $b_{0,1}$ along $L_{I_{\alpha}}^{\perp}$.
If the equation is not satisfied, then necessarily $b_{1,0}=0,$ then we are again in the previous case. Otherwise, we have $\dim_{\mathbb{R}} \mathrm{Aut} (X) = 2+2+2+2 = 8$.
\item Finally, in the general case, split $X=X_1+X_2$ with $X_1\in L_{I_\alpha}$ and $X_2\in L_{I_\alpha}^\perp$.
If the component of $\lambda \cdot b_{1,0} \cdot \alpha^{-1}$ along $L_{I_\alpha}$ is zero, then $\lambda \cdot b_{1,0} \cdot \alpha^{-1} \in L_{I_\alpha}^\perp$, so we are back to the previous case.
If the component of $\lambda \cdot b_{1,0} \cdot \alpha^{-1}$ along $L_{I_\alpha}$ is non-zero, then this implies, as before, that $\lambda \cdot b_{1,0} \cdot \alpha^{-1}=0$. Note that this happens exactly when the component of $\lambda$ along $L_{I_\alpha}^\perp$ is non-zero. Then we argue as in the previous case, getting $\dim_{\mathbb{R}} \mathrm{Aut} (X) \in \{4,8\}$.
\end{itemize} 
 
 \smallskip
 
 Suppose now $\alpha\in\R$, and that $\lambda\not\in\R$. By imposing the slice regularity of the left-hand side, we get that: $a_{h,k}=0$ and $b_{h,k}=0$ for $(h,k)\not\in\{(1,0),\, (0,k) \;:\; k\geq1\}$.
 Therefore we are reduced to:
 \begin{eqnarray*}
 \lefteqn{ z \cdot (\alpha+\mu\cdot\lambda)\cdot a_{1,0}+\sum_{k\geq1}z^k\cdot\mu^k\cdot\alpha^k\cdot a_{0,k} } \\[5pt]
 &=& z \cdot (a_{1,0}\cdot\alpha+b_{1,0}\cdot\lambda)+\sum_{k\geq1} z^{k}\cdot \mu^k\cdot (a_{0,k}\cdot\alpha + b_{0,k}\cdot\lambda) \;,\\[5pt]
 \lefteqn{ z\cdot (\alpha + \mu \cdot \lambda)\cdot b_{1,0}+\sum_{k\geq1} z^k\cdot \mu^k \cdot \alpha^k \cdot b_{0,k} } \\[5pt]
 &=& z \cdot b_{1,0}\cdot\alpha+\sum_{k\geq1} z^{k}\cdot \mu^k\cdot b_{0,k}\cdot\alpha \;.\nonumber
 \end{eqnarray*}
 We equal: firstly, the coefficients in $z$; secondly, the coefficients in $\mu$. We are reduced to:
 \begin{eqnarray*}
  \alpha\cdot a_{1,0} &=& a_{1,0}\cdot\alpha+b_{1,0}\cdot\lambda \;,\\[5pt]
  \lambda\cdot a_{1,0} + \alpha\cdot a_{0,1} &=& a_{0,1}\cdot\alpha+ b_{0,1}\cdot \lambda \;,\\[5pt]
  \alpha^k \cdot a_{0,k} &=& a_{0,k}\cdot \alpha + b_{0,k}\cdot \lambda \;, \qquad \text{ for } k>1\;,\\[5pt]
  \alpha \cdot b_{1,0} &=& b_{1,0}\cdot\alpha \;,\\[5pt]
  \lambda\cdot b_{1,0}+ \alpha \cdot b_{0,1} &=& b_{0,1}\cdot\alpha \;,\\[5pt]
  \alpha^k \cdot b_{0,k} &=& b_{0,k}\cdot\alpha \;, \qquad \text{ for } k>1 \;.
 \end{eqnarray*}
 From the first equation, $\alpha$ being real, we get that
 $$ b_{1,0} \;=\; 0 \;. $$
 From the last equation, since $|\alpha|<1$, we get that $b_{0,k}=0$ for $k>1.$
 Whence, from the third equation, the same argument gives $a_{0,k}=0$ for $k>1$.
 Therefore we are reduced to:
 $$ a_{1,0}\in\mathbb{H} \;, \qquad a_{0,1}\in\mathbb{H}\;, \qquad b_{0,1} \;=\; \lambda\cdot a_{1,0}\cdot\lambda^{-1} \;,$$
 the other coefficients are zero,
 with the condition of invertibility \eqref{eq:invertibilita}. Hence $\dim_{\mathbb{R}} \,\mathrm{Aut}(X)=8.$
 
 \smallskip
 
 The last case is when $\alpha\in\R$ and $\lambda\in\R$. We get the equations:
 \begin{eqnarray*}
 \sum_{h,k\geq0} z^{h+k} \cdot (\alpha+\mu\cdot\lambda)^h\cdot \mu^k \cdot \alpha^k\cdot a_{h,k} &=& \sum_{h,k\geq0} z^{h+k} \cdot \mu^k \cdot (a_{h,k}\cdot\alpha+b_{h,k}\cdot\lambda) \;,\\[5pt]
 \sum_{h,k\geq0} z^{h+k} \cdot (\alpha+\mu\cdot\lambda)^h\cdot \mu^k \cdot \alpha^k\cdot b_{h,k} &=& \sum_{h,k\geq0} z^{h+k} \cdot \mu^k\cdot b_{h,k}\cdot\alpha \;.\nonumber
 \end{eqnarray*}

 In the second equation, we compare the coefficients in $z^t$ for $t=h+k>1$ and then in $\mu^0$, getting $\alpha^t\cdot b_{t,0}=b_{t,0}\cdot\alpha$, whence:
 $$ b_{h,0}\;=\;0 \qquad \text{ for } h>1 \;. $$
 Then, in the second equation, we compare the coefficients in $z^t$ for $t=h+k>1$ and then in $\mu$, getting $\alpha^{t-1}\cdot\alpha\cdot b_{t-1,1}+t\cdot\alpha\cdot\lambda\cdot b_{t,0}= b_{t-1,1}\cdot\alpha$, whence:
 $$ b_{h-1,1} \;=\; 0 \qquad \text{ for } h>1 \;. $$
 By induction on $\ell\geq0$, by comparing the coefficients in $z^t$ for $t>1$ and then in $\mu^\ell$, we get that:
 $$ b_{h-\ell,\ell} \;=\; 0 \qquad \text{ for } h>1,\; \ell\geq 0 \;. $$
 Now, in the first equation, we compare the coefficients in $z^t$ for $t>1$ and then in $\mu^0$, we get that $a_{h,0}=0$ for $h>1$; proceeding by induction as before, we finally get:
 $$ a_{h-\ell,\ell}\;=\; 0 \qquad \text{ for }h>1,\; \ell\geq0 \;. $$
 Finally, by looking at the degree $h+k=1$, we have that:
 $$ a_{1,0} \;\in\; \mathbb{H} \;, \qquad a_{0,1} \;\in\; \mathbb{H}\;, \qquad b_{1,0} \;=\; 0 \;, \qquad b_{0,1} \;=\; a_{1,0} \;, $$
 the other coefficients are zero, and we assume the invertibility condition \eqref{eq:invertibilita}. Hence $\dim_{\mathbb{R}} \mathrm{Aut}(X) =8$.

\medskip

 \item[Case B.] {\em Consider the case $f(z,w)=\left(z\cdot \beta^p+w^p \cdot \lambda, w\cdot \beta\right)$ for $p\in\mathbb{N}\setminus\{0,1\}$, $\lambda\in\H\setminus\{0\}$ and $0<|\beta|<1$, $\beta\in\R$.}
 
 \medskip
 
 \noindent Consider first the case $\lambda\in\R$. By imposing \eqref{eq:comm} on the quaternionic curve $\gamma_\mu:=\{ (z^p, \mu\cdot z) \;:\; z\in\mathbb{H}\}$ for a fixed $\mu\in\R$, we get:
 \begin{eqnarray*}
  \lefteqn{ \sum_{h,k} \left( z^{ph+k} \cdot \mu^k \cdot a_{h,k} \right) \cdot \beta^p + \left( \sum_{h,k} z^{ph+k} \cdot \mu^k \cdot b_{h,k} \right)^p \cdot \lambda } \\[5pt]
  &=&
  \sum_{h,k} z^{ph+k} \cdot (\beta^p+\mu^p\cdot\lambda)^h \cdot \mu^k\cdot \beta^k\cdot a_{h,k} \;, \\[5pt]
  \lefteqn{ \sum_{h,k} z^{ph+k} \cdot \mu^k \cdot b_{h,k} \cdot \beta } \\[5pt]
  &=& \sum_{h,k} z^{ph+k} \cdot (\beta^p+\mu^p\cdot\lambda)^h\cdot\mu^k\cdot\beta^k\cdot b_{h,k} \;.
 \end{eqnarray*}
 We argue now as in case {\itshape (A.3)}, the only difference being that we consider $h$ with weight $p$.
 Namely, from the second equation, we compare the coefficients in $z^t$ for $t=p\cdot h+k>1$ and we get that
 $$ b_{h,k}\;=\;0 \qquad \text{ for } \qquad (h,k) \not\in \{ (1,0), \, (0,1), \, \ldots, \, (0,p) \} \;.$$
 Notice also that, in order to the left-hand side of the first equation being slice-regular, we need
 $$ b_{1,0} \;\in\; \R \;, \qquad b_{0,1} \;\in\; \R \;, \qquad \ldots \;, \qquad b_{0,p} \;\in\; \R \;. $$
 The same argument again, applied to the first equation, gives:
 $$ a_{h,k}\;=\;0 \qquad \text{ for } \qquad (h,k) \not\in \{ (1,0), \, (0,1), \, \ldots,\, (0,p) \} \;.$$
 Moreover, we get the conditions:
 \begin{eqnarray*}
  a_{0,k} &=& 0 \quad \text{ for } k\in\{1,\ldots,p-1\} \;, \\[5pt]
  b_{0,1}^{p} &=& a_{1,0} \;, \\[5pt]
  b_{0,k} &=& 0 \quad \text{ for } k\in\{2,\ldots,p-1\} \;, \\[5pt]
  b_{1,0}^p &=& 0 \;, \\[5pt]
  b_{0,p} &=& 0 \;.
 \end{eqnarray*}
 Finally, we have
 $$ \Phi (z,w) \;=\; \left( z\cdot b_{0,1}^p + w^p \cdot a_{0,p} ,\; w \cdot b_{0,1} \right) $$
 where
 $$ b_{0,1} \;\in\; \R \qquad \text{ and } \qquad a_{0,p} \;\in\; \mathbb{H} \; . $$
The invertibility of $\Phi$ is guaranteed if $b_{0,1} \in \mathbb{R}\setminus\{0\}$.
 
 Consider now the case that $\lambda\not\in\R$. As before, we impose \eqref{eq:comm} on the quaternionic curve $\gamma_\mu:=\{(z^p,\mu\cdot z) \;:\; z\in\mathbb{H}\}$, and look at the second component. We have:
 $$ \sum_{h,k} z^{ph+k} \cdot \mu^k \cdot b_{h,k} \cdot \beta \;=\; \sum_{h,k} \left(z^p \cdot (\beta^p + \mu^p \cdot \lambda) \right)^h \cdot z^k \cdot \mu^k \cdot \beta^k \cdot b_{h,k} \;. $$
 The slice regularity of the left-hand side forces
 $$ b_{h,k} \;=\; 0 \qquad \text{ for } \qquad (h,k)\not\in\{(0,k),(1,0)\} \;. $$
 Arguing as we did before in the case $\lambda\in\R$, we get:
 $$ b_{h,k} \;=\; 0 \qquad \text{ for } \qquad (h,k)\neq(0,1) \;. $$
 Look now at the first component of \eqref{eq:comm} on the quaternionic curve $\gamma_\mu$:
 $$ \sum_{h,k} z^{ph+k} \cdot \mu^k \cdot a_{h,k} \cdot \beta^p + (z \cdot \mu \cdot b_{0,1})^p \cdot \lambda \;=\; \sum_{h,k} \left( z^p \cdot (\beta^p+\mu^p\cdot\lambda)\right)^h \cdot z^k \cdot \beta^k \cdot a_{h,k} \;.$$
 The slice regularity of the first series forces
 $$ b_{0,1} \;\in\; \R \; \qquad \text{ and } \qquad a_{h,k} \;=\; 0 \quad \text{ for } \quad (h,k)\not\in\{(0,k),(1,0)\} \;. $$
 In order to have an automorphism, $b_{0,1} \in \mathbb{R}\setminus \{ 0 \}$.
 We argue as before to conclude that the conditions for automorphisms are the same.
\end{description}
This concludes the proof.
\end{proof}

\section{Families of slice-quaternionic structures and deformations}\label{sec:deformations}
In this section, we construct families of slice-quaternionic Hopf surfaces, connecting cases {\itshape (A.1)} and {\itshape (A.3)}, respectively cases {\itshape (A.2.1)} and {\itshape (B)}, in the notation of Theorem \ref{thm:hopf-quat}.
These are examples of deformations in the slice-quaternionic setting.
(For the holomorphic analogue, see \cite[Example 2.15]{kodaira}.)

\medskip

We start constructing a {\em smooth} family of slice-quaternionic Hopf surfaces, connecting cases {\itshape (A.1)} and {\itshape (A.3)}, in Theorem \ref{thm:hopf-quat}.

Consider
$$ \mathbb H^2\setminus\{(0,0)\}\times\mathbb H \;. $$
Fix $\alpha\in\mathbb H$ such that $0<|\alpha|<1$. Consider the slice-regular function
$$ F\colon \mathbb H^2\setminus\{(0,0)\}\times\mathbb H \to \mathbb H^2\setminus\{(0,0)\}\times\mathbb H \;,$$
$$ F(z,w,\lambda) \;:=\; F_{1,\alpha,\alpha}(z,w,\lambda) \;:=\; (z\cdot \alpha+w\cdot \lambda,\, w\cdot \alpha, \, \lambda) \;, $$
where $\alpha\in\mathbb H$ is such that $0<|\alpha|<1$.
Define
$$
\mathcal{M} \;:=\; \left. \mathbb H^2\setminus\{(0,0)\}\times\mathbb H \middle\slash \Gamma \right.
\qquad
\text{ where }
\qquad
\Gamma \;:=\; \left\{ F^{\circ k} \;:\; k\in\Z \right\} \;.
$$
Since the action of $\Gamma$ is fixed-point free and properly discontinuous, then $\mathcal{M}$ is a smooth manifold.
(Note that, for $\alpha\in\R$, then any $F^{\circ k}$ is slice-regular, then we get a slice-quaternionic family.)

The slice-quaternionic projection
$$ \pi \colon \mathbb H^2\setminus\{(0,0)\}\times\mathbb H \to \mathbb H $$
makes the following diagram commutative:
$$\xymatrix{
\mathbb H^2\setminus\{(0,0)\}\times\mathbb H \ar[dr]_{\pi} \ar[rr]^{F_{1,\alpha,\alpha}} && \mathbb H^2\setminus\{(0,0)\}\times\mathbb H \ar[dl]^{\pi} \\
&\mathbb H &
}$$
whence it induces a map
$$ \pi \colon \mathcal{M} \to \mathbb H \;, $$
whose fibres are the slice-regular manifolds
$$ \pi^{-1}(\lambda) \;=\; X_{1,\alpha,\alpha,\lambda} \;. $$

Note how the group of automorphisms changes:
$$ \dim_\R \mathrm{Aut}(\pi^{-1}(0))\;\in\;\{8,16\}\;\qquad\text{ and }\qquad\dim_\R\mathrm{Aut}(\pi^{-1}(\lambda))\;\in\;\{4,8\}\;.$$
In particular, assume $\alpha\in\R$ as a special example:
$$ \dim_\R \mathrm{Aut}(\pi^{-1}(0))\;=\;16 \;\qquad\text{ and }\qquad \dim_\R\mathrm{Aut}(\pi^{-1}(\lambda))\;=\;8 \;.$$

\medskip

We construct now a {\em slice-quaternionic} family of slice-quaternionic Hopf surfaces, connecting cases {\itshape (A.2.1)} and {\itshape (B)}, respectively in Theorem \ref{thm:hopf-quat}.

Consider
$$ \mathbb H^2\setminus\{(0,0)\}\times\mathbb H \;. $$
Fix $p\in\N$ with $p>1$, and $\beta\in\R$ such that $0<|\beta|<1$, and take $\alpha=\beta^p$. Consider the slice-regular function
$$ F\colon \mathbb H^2\setminus\{(0,0)\}\times\mathbb H \to \mathbb H^2\setminus\{(0,0)\}\times\mathbb H \;,$$
$$ F(z,w,\lambda) \;:=\; (z\cdot \beta^p+w^p\cdot \lambda,\, w\cdot \beta, \, \lambda) \;. $$
Define
$$
\mathcal{M} \;:=\; \left. \mathbb H^2\setminus\{(0,0)\}\times\mathbb H \middle\slash \Gamma \right.
\qquad
\text{ where }
\qquad
\Gamma \;:=\; \left\{ F^{\circ k} \;:\; k\in\Z \right\} \;.
$$
Since the action of $\Gamma$ is fixed-point free and properly discontinuous, then $\mathcal{M}$ is a slice-quaternionic manifold by Proposition \ref{prop:group-action-objs}.

The slice-quaternionic projection
$$ \pi \colon \mathbb H^2\setminus\{(0,0)\}\times\mathbb H \to \mathbb H $$
makes the following diagram commutative:
$$\xymatrix{
\mathbb H^2\setminus\{(0,0)\}\times\mathbb H \ar[dr]_{\pi} \ar[rr]^{F} && \mathbb H^2\setminus\{(0,0)\}\times\mathbb H \ar[dl]^{\pi} \\
&\mathbb H &
}$$
whence it induces a slice-quaternionic map
$$ \pi \colon \mathcal{M} \to \mathbb H \;, $$
whose fibres are
$$ \pi^{-1}(\lambda) \;=\; X_{p,\beta^p,\beta,\lambda} \;. $$
They are in case {\itshape (B)} for $\lambda\neq0$. In particular, for $\lambda=0$, we have:
$$ \pi^{-1}(0) \;=\; X_{p,\beta^p,\beta,0} \;=\; X_{1,\beta^p,\beta,0} \;, $$
which is in case {\itshape (A.2.1)} with $\beta \in \mathbb{R},$ $\alpha=\beta^p \in \mathbb{R},\,\,\, \alpha \neq \beta,$ and which is different from $\pi^{-1}(\lambda)$, for $\lambda\neq0$, because of Theorem \ref{thm:aut-hopf}.

Note how the group of automorphisms changes:
 \begin{eqnarray*}
 \mathrm{Aut}(\pi^{-1}(0)) &=& \left\{ \varphi(z,w)=\left(z\cdot a_{1,0},\, w\cdot b_{0,1}\right) \right. \\[5pt]
 && \left. \left. \;:\; a_{1,0} , b_{0,1} \in \mathbb{H} \text{ such that } b_{0,1} \cdot a_{1,0} \neq 0 \right\} \middle\slash \langle f \rangle \right. \;,
 \end{eqnarray*}
and, for $\lambda\neq0$,
 \begin{eqnarray*}
 \mathrm{Aut}(\pi^{-1}(\lambda)) &=& \left\{ \varphi(z,w)=\left(z\cdot b_{0,1}^p+w^p\cdot a_{0,p},\, w\cdot b_{0,1}\right) \right. \\[5pt]
 && \left. \left. \;:\; b_{0,1}\in\R, a_{0,p} \in \H \text{ such that } b_{0,1}\neq 0 \right\} \middle\slash \langle f \rangle \right. \;.
 \end{eqnarray*}
In particular,
$$ \dim_\R\mathrm{Aut}(\pi^{-1}(0))\;=\;8 \qquad \text{ and } \qquad \dim_\R\mathrm{Aut}(\pi^{-1}(\lambda))\;=\;5\;.$$

\end{document}